\DeclareMathAlphabet{\pazocal}{OMS}{zplm}{m}{n}
\newtheorem{theorem}{Theorem}
\newtheorem{lemma}{Lemma}
\newtheorem{definition}{Definition}
\newtheorem{remark}{Remark}
\newtheorem{proposition}{Proposition}
\newtheorem{corollary}{Corollary}
\newcommand{\leqnomode}{\tagsleft@true}
\newcommand{\reqnomode}{\tagsleft@false}
\def\({\begin{eqnarray}}
\def\){\end{eqnarray}}
\def\[{\begin{eqnarray*}}
\def\]{\end{eqnarray*}}
\def\part#1#2{\frac{\partial #1}{\partial #2}}
\def\A{\pazocal{A}}
\def\C{\pazocal{C}}
\def\H{\pazocal{H}}
\def\D{\pazocal{D}}
\def\I{\pazocal{I}}
\def\E{\pazocal{E}}
\def\S{\pazocal{S}}
\def\T{\pazocal{T}}
\begin{document}

\title{Invariance of velocity angles and flocking in the Inertial Spin model}   
\author{Ioannis Markou}  
\maketitle

\begin{abstract} We study the invariance of velocity angles and
flocking properties of the Inertial Spin model
introduced by Cavagna et al. [J. Stat. Phys., 158, (2015), 601--627]. We present a novel
approach, based on a second order nonlinear Gronwall
inequality for the velocity diameter, to identify invariant regions
and study the flocking behavior of the model. This is
an approach inspired by the work of Choi-Ha-Yun in the
study of synchronization for kuramoto oscillators with finite
inertia [Physica D, 240, (2011), 32--44]. We give sufficient conditions in terms of the
parameters of the model and the initial data, so that invariant regions and
asymptotic alignment of velocities is possible for coupling interactions
that satisfy general assumptions.
\end{abstract}

\textbf{Keywords}: Inertial Spin model, Vicsek model, unit speed constraint.

\textbf{2010 MR Subject Classification}: 82C22, 34D05, 34C15.

\section{Introduction to the Model} \label{Sec:Intro}
Collective behavior of self-propelled individual agents is observed
everywhere in nature, e.g. in swarming insects, herding of land animals, synchronization
in chemical oscillations etc. Many individual-based models (IBM models)
have been proposed to study otherwise complex phenomena such
as synchronization, flocking and swarming of birds, turning in schools of fish etc. In
the most successful models we should include the Kuramoto model for synchronization \cite{Ku, Wi}, the now classical three-zone model of Reynolds \cite{Re}, the Cucker-Smale model for flocking \cite{CuSm1,CuSm2}, the Vicsek
model for flocking \cite{ViCzBJCoSc}, models that study turning in schools of fish \cite{DeMo1,DeMo2},
other orientation based flocking models e.g. \cite{DeFrMe, HaKiLeNo} etc. The research in the above otherwise simple models has followed so many different paths, that it would be futile to mention the entire list. For instance, we may direct the reader to following literature e.g. in Vicsek type models \cite{ChGiGrPeRa, CrMePaBr, Gi, ToTu}, delayed Cucker-Smale systems \cite{ChHas1, ErHaSu, HaMa1, HaMa2}, Cucker-Smale with asymmetric weights \cite{MoTa}, synchronization with inertial effects \cite{ChHaMo, ChHaYu1, ChLiHaXuYu}, or the survey articles \cite{BeHaOu, BeDeTa, ViZa} etc.

In this paper we study emergence behavior in a relatively new model called
the Inertial Spin (IS) model that was introduced to
describe the mechanism in which collective turns occur in groups of birds.
The IS model is a model for the collective
dynamics of particles that takes into account, not only position and velocity of particles, but also
the inertial angular momentum (spin) of each agent. The model was introduced in \cite{AtCaCaGiJeMi, CaCaGiGr} and
was later studied in \cite{BeBuCa, HaKiKiSh, YaMa}. Its introduction and justification came to treat one major flaw
of the classical Vicsek model \cite{ViCzBJCoSc}. Indeed, the classical Vicsek model describes the
evolution of self-propelled particles in 2-d having a constant speed, where the direction of the
velocity is updated by computing the average of neighboring directions.
One of the drawbacks of the Vicsek model is the inadequacy to describe how collective turns occur. In experiments
that have been performed in real flocks it has been observed that when a bird starts to turn, then the
turning information is propagated along the group and the whole group performs a collective turn. This has been
addressed with the introduction of an extra (spin) variable that accounts for the turning information. In the
new system each particle is described by vectors $x_i, v_i, s_i \in \mathbb{R}^3$ that stand for the position, velocity and spin of the $i'$ th particle.

We describe the Inertial Spin model by a system of ODEs for the positions, velocities, and spins, i.e.
\begin{align} \label{IS} \left\{ \begin{array}{ll} \dot{x}_{i}(t)&=v_{i}(t), \quad i=1
\ldots, N, \quad t>0 ,
\\ \\ \dot{v}_{i}(t)&=\frac{1}{\chi} s_i(t) \times v_i(t) ,
\\ \\ \dot{s}_i(t)&=v_i(t) \times \left[\frac{k}{N}\sum \limits_{j=1}^N
\psi_{ij}(v_j(t) -v_i(t)) -\gamma \dot{v}_i(t) \right],
\end{array} \right.
\end{align}
with prescribed initial conditions that satisfy:
\begin{equation} \label{IC} (x_i,v_i,s_i)\Big|_{t=0}=(x_{i0},v_{i0},s_{i0}), \quad s_{i0} \cdot v_{i0}=1, \,
\text{and normalized initial speed} \quad |v_{i0}|=1, \quad 1\leq i \leq N. \end{equation}
Here the constant $\chi>0$ is the moment of inertia, $\gamma>0$ is the
friction (damping) parameter, and $k>0$ is the coupling strength of interactions. The
matrix $(\psi_{ij})_{i,j}$ is the matrix of communication weights between particles $i$ and $j$
which measures the strength of the social interaction between the two agents.
Since all the vectors are in $\mathbb{R}^3$ we denote by $\cdot$ and $\times$ the
usual inner (dot) and outer (cross) product between two vectors. Similarly, by $|\cdot|$ we denote
the induced euclidian distance in $\mathbb{R}^3$. We also consider the vectors of positions, velocities and spins defined by
\begin{equation} \label{Def} x:=(x_1,\ldots, x_N)\in \mathbb{R}^{3N}, \qquad v:=(v_1,\ldots, v_N)\in \mathbb{R}^{3N}, \qquad s:=(s_1, \ldots,s_N)\in \mathbb{R}^{3N}. \end{equation}
The derivation of the IS system is beyond the scope of this paper and can be found in e.g.
\cite{CaCaGiGr, HaKiKiSh}.

The final component of system \eqref{IS} that we address is the communication matrix $(\psi_{ij})_{ij}$.
The $\psi_{ij}$ element of the communication matrix describes the social interaction
between the $i$ and $j$ agents. The minimum of assumptions that we make for $\psi_{ij}$ is that the $(\psi_{ij})_{ij}$ matrix
is nonnegative, symmetric i.e. $\psi_{ij}=\psi_{ji}$ for all pairs $i,j$, and its elements are bounded by some $\psi_M>0$.
There are two main frameworks: The first one is a general framework that considers variable in time weights, with only basic assumption a lower bound on the
$\psi_{ij}$ values, i.e that \begin{equation} \label{PsiAssum1}\psi_m \leq \psi_{ij}(t) \leq \psi_M , \qquad \text{for some}\quad \psi_M > \psi_m > 0 , \quad \forall i,j \qquad \forall t\geq 0. \end{equation} This assumption is considerably more general than the case of constant communication weights, and is very useful in systems with variable social interactions.
Another realistic framework is the case where $\psi_{ij}:=\psi(|x_i-x_j|)$ is
a function of the metric distance between the $i,j$ particles. The function $\psi: [0, \infty) \to \mathbb{R}_+$ in this framework is bounded, decreasing, and Lipschitz continuous, \begin{align} \label{PsiAssum2} & 0 < \psi(r) \leq \psi(0), \qquad
[\psi]_{Lip}:=\sup \limits_{x \neq y}\frac{|\psi(x)-\psi(y)|}{|x-y|}<\infty ,
\\  \nonumber &(\psi(r_2)-\psi(r_1))(r_2-r_1) \leq 0, \quad \forall r,r_1,r_2>0 .\end{align} A classical
example in this category is the communication rate in the
classical Cucker-Smale model \cite{CuSm1, CuSm2}, where $\psi(r)$
takes the form $\psi(r)=(1+r^2)^{-\beta/2}$ for some $\beta>0$. In this article we will work mainly with the
first framework, where communication weights are allowed to vary with time but have a strong positive bound from below.

The rest of this article is structured as follows. In Section \ref{Sec:Prelim}
we give some of the most important properties of the IS model, and then we present the most important
existing result on its aggregate behavior. After that, we present the main results of this paper. In the short Section \ref{Sec:Simpl} that follows, we show how the IS model is connected to other systems of collective behavior. In Section \ref{Sec:Veloc-diam}, we derive the basic differential inequality for the velocity diameter. In Section \ref{Sec:Gron}, we give important lemmas that allow the treatment of the inequalities we derived in Section \ref{Sec:Veloc-diam}. Finally, in Section \ref{Sec:Proofs} we prove the main results.

\section{Preliminaries and Main results} \label{Sec:Prelim}

\subsection{Preliminaries and Definitions}
The Inertial Spin model enjoys some important properties (e.g. conservations, invariances etc) which
we try to include in this section, starting with the following Proposition from
\cite{HaKiKiSh} (see Propositions 2.1 \& 2.2).

\begin{proposition} \label{Prop:Properties}  \cite{HaKiKiSh}
Assume that $(x(t),v(t),s(t))$ is the solution to system \eqref{IS}, with initial
conditions $(x_0,v_0,s_0)$ that satisfy \eqref{IC}. Then, the following assertions hold:
\\ (i) The speed of each particle is conserved,
\begin{equation*} \frac{d}{dt}|v_i(t)|=0 , \quad t>0, \quad i=1,\ldots, N. \end{equation*}
\\ (ii) The inner product of the spin and velocity is conserved,
\begin{equation*}\frac{d}{dt}(s_i(t) \cdot v_i(t))=0 , \quad t>0, \quad i=1,\ldots, N. \end{equation*}
\\ (iii) The average spin defined by $s_c(t):=\frac{1}{N} \sum \limits_{i=1}^N s_i(t)$ dissipates with rate $\frac{\gamma}{\chi}$, i.e.
\begin{equation*} s_c(t)=s_c(0) e^{-\frac{\gamma}{\chi}t} .\end{equation*}
\\ (iv) System \eqref{IS} has rotational symmetry, i.e. for an orthogonal $3 \times 3$ matrix $O \in O(3)$ and
the new dynamical variables $\widetilde{x}_i, \widetilde{v}_i, \widetilde{s}_i$ defined by
\begin{equation*} \widetilde{x}_i:=O x_i, \qquad \widetilde{v}_i:=O v_i,
\qquad \widetilde{s}_i:=Os_i ,\end{equation*} then
$(\widetilde{x}, \widetilde{v}, \widetilde{s})$ is also a solution of \eqref{IS}.
\end{proposition}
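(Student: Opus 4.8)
The plan is to obtain all four assertions by direct differentiation, using nothing beyond the elementary identities $a\cdot(a\times b)=a\cdot(b\times a)=0$, the vector triple product identity $a\times(b\times c)=(a\cdot c)b-(a\cdot b)c$, and the symmetry $\psi_{ij}=\psi_{ji}$. The order in which I treat them matters, since the argument for (iii) will reuse (i)--(ii).

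First, for (i) I would differentiate $|v_i|^2$: by \eqref{IS}, $\tfrac{d}{dt}|v_i|^2=2v_i\cdot\dot v_i=\tfrac{2}{\chi}\,v_i\cdot(s_i\times v_i)=0$, since a cross product is orthogonal to each of its factors; combined with \eqref{IC} this gives $|v_i(t)|\equiv 1$ and hence $\tfrac{d}{dt}|v_i|=0$. Next, for (ii) I would differentiate the scalar $s_i\cdot v_i$ by the product rule, $\tfrac{d}{dt}(s_i\cdot v_i)=\dot s_i\cdot v_i+s_i\cdot\dot v_i$, and observe that the first term vanishes because $\dot s_i=v_i\times[\,\cdots]$ is orthogonal to $v_i$, while the second vanishes because $\dot v_i=\tfrac1\chi s_i\times v_i$ is orthogonal to $s_i$; with \eqref{IC} this yields $s_i\cdot v_i\equiv 1$.

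The one assertion needing more than a single line is (iii). I would sum the spin equations over $i$ and split $\dot s_c$ into a coupling contribution and a dissipative one. The coupling part is $\tfrac{k}{N^2}\sum_{i,j}\psi_{ij}\,v_i\times(v_j-v_i)=\tfrac{k}{N^2}\sum_{i,j}\psi_{ij}\,v_i\times v_j$ (the diagonal $i=j$ drops out), and this equals $0$: relabelling $i\leftrightarrow j$ and using $\psi_{ij}=\psi_{ji}$ together with $v_i\times v_j=-v_j\times v_i$ shows the sum is its own negative. For the dissipative part $-\tfrac{\gamma}{N}\sum_i v_i\times\dot v_i$, I would rewrite $v_i\times\dot v_i=\tfrac1\chi v_i\times(s_i\times v_i)=\tfrac1\chi\big(|v_i|^2 s_i-(v_i\cdot s_i)v_i\big)$ by the triple product identity, and then feed in (i)--(ii), i.e.\ $|v_i|^2=1$ and $v_i\cdot s_i=1$. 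Summing over $i$ leaves a linear first-order ODE for $s_c$ with relaxation rate $\gamma/\chi$, whose integration gives the stated $s_c(t)=s_c(0)e^{-\gamma t/\chi}$.

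Finally, for (iv) I would substitute $\widetilde x_i=Ox_i$, $\widetilde v_i=Ov_i$, $\widetilde s_i=Os_i$ into \eqref{IS} and verify the three equations one by one. Linearity of $O$ handles $\dot{\widetilde x}_i=Ov_i=\widetilde v_i$ and the communication term, $\tfrac kN\sum_j\psi_{ij}(\widetilde v_j-\widetilde v_i)=O\big(\tfrac kN\sum_j\psi_{ij}(v_j-v_i)\big)$; for the cross-product terms I would apply $(Oa)\times(Ob)=O(a\times b)$ (valid for orientation-preserving $O$), which turns $O\dot v_i$ and $O\dot s_i$ into precisely the right-hand sides of \eqref{IS} evaluated at $(\widetilde x,\widetilde v,\widetilde s)$; the constraints in \eqref{IC} are preserved because an orthogonal $O$ is an isometry, so $\widetilde s_{i0}\cdot\widetilde v_{i0}=s_{i0}\cdot v_{i0}=1$ and $|\widetilde v_{i0}|=|v_{i0}|=1$. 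I do not expect a genuine obstacle here; the only points deserving care are that the proof of (iii) truly relies on the conservation laws (i)--(ii) having been established first, and that the cross-product identity used in (iv) is the orientation-preserving one.
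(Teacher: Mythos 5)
The paper itself offers no proof of this proposition (it is quoted from \cite{HaKiKiSh}), so your argument stands on its own; parts (i), (ii) and the vanishing of the coupling sum in (iii) by the antisymmetry of $v_i\times v_j$ together with $\psi_{ij}=\psi_{ji}$ are exactly the standard computations and are fine. However, your step for (iii) does not close as written. You correctly reduce the dissipative term to $-\tfrac{\gamma}{\chi}\big(|v_i|^2 s_i-(v_i\cdot s_i)v_i\big)$ and then substitute $v_i\cdot s_i=1$, taking \eqref{IC} at face value. But with that value the averaged equation is
\begin{equation*}
\dot s_c=-\frac{\gamma}{\chi}\Big(s_c-\frac{1}{N}\sum_i v_i\Big),
\end{equation*}
an inhomogeneous ODE whose solution is not $s_c(0)e^{-\gamma t/\chi}$; the claimed ``relaxation rate $\gamma/\chi$'' conclusion simply does not follow. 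The resolution is that the constraint in \eqref{IC} must be $s_{i0}\cdot v_{i0}=0$ (the spin is orthogonal to the velocity; the ``$=1$'' is a typo), which is what the paper itself uses implicitly in Section \ref{Sec:Prelim} when it writes $v_i\times(s_i\times v_i)=s_i$, $s_i\times(s_i\times v_i)=-|s_i|^2v_i$, and the reduced spin equation \eqref{Sp}. With $s_i\cdot v_i\equiv 0$ (which your part (ii) then propagates from $t=0$), the extra term drops and $\dot s_c=-\tfrac{\gamma}{\chi}s_c$ gives the stated decay. You should make this correction explicit rather than inheriting the inconsistent normalization.

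A smaller point on (iv): since you define $\widetilde s_i=Os_i$ and use $(Oa)\times(Ob)=O(a\times b)$, your verification only covers $O\in SO(3)$, whereas the statement claims all of $O(3)$. For $\det O=-1$ one has $(Oa)\times(Ob)=-O(a\times b)$, so $\dot{\widetilde v}_i=-\tfrac1\chi\widetilde s_i\times\widetilde v_i$ and the equation fails unless the spin (a pseudovector) is transformed as $\widetilde s_i=\det(O)\,Os_i$. Either restrict to $SO(3)$ or adjust the transformation of $s_i$; as it stands your caveat ``valid for orientation-preserving $O$'' flags the issue without resolving it.
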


Simple geometric identities offer some further insights on the system \eqref{IS}. For a start, the
identity $v_i \times (s_i \times v_i)=(v_i \cdot v_i)s_i -(s_i \cdot v_i)v_i=s_i$ means that we can rewrite the spin equation $(1)_3$ as
\begin{equation} \label{Sp} \dot{s}_i(t)=\frac{k}{N}\sum \limits_{j=1}^N \psi_{ij}
v_i(t) \times v_j(t)  -\frac{\gamma}{\chi} s_i(t) .\end{equation}
We may derive a second order equation for $\ddot{v}_i$ using the system \eqref{IS}. Indeed, differentiating $(1)_2$ and then using $(1)_3$ we get
\begin{align} \nonumber \chi \ddot{v}_i &=\dot{s}_i \times v_i +s_i \times \dot{v}_i \\ \nonumber &=
\left[ v_i \times \left( \frac{k}{N}\sum \limits_{j=1}^N \psi_{ij}v_j -\gamma \dot{v}_i \right)\right]
\times v_i +\frac{1}{\chi} s_i \times (s_i \times v_i) \\ \label{IS2} &=\frac{k}{N}\sum \limits_{j=1}^N \psi_{ij}
(v_i \times v_j)\times v_i -\gamma (v_i \times \dot{v}_i)\times v_i +\frac{1}{\chi}s_i \times (s_i \times v_i). \end{align}
We now use the following identities in \eqref{IS2} (all of which follow from the triple product formula
$a \times (b \times c)=b(a \cdot c)-c(a \cdot b)$ for $a,b,c
\in \mathbb{R}^3$) \begin{align*} (v_i \times v_j)\times v_i
&=v_j -(v_i \cdot v_j)v_i ,  \qquad (v_i \times \dot{v}_i)\times v_i=(v_i \cdot v_i)
\dot{v}_i-(v_i \cdot \dot{v}_i)v_i=\dot{v}_i , \\ s_i \times (s_i \times v_i)&=(s_i \cdot v_i)s_i-
(s_i \cdot s_i)v_i=-|s_i|^2 v_i=-\chi^2 |\dot{v}_i|^2 v_i ,\end{align*}
and we end up with the following second order equation for velocities
\begin{equation} \label{IS1} \chi \ddot{v}_i + \gamma \dot{v}_i +\chi |\dot{v}_i|^2 v_i =\frac{k}{N}\sum
\limits_{k=1}^{N}\psi_{ik}(v_k -(v_i \cdot v_k)v_i) .\end{equation} The formulation of \eqref{IS1} allows us to
have a first glimpse of the mechanism that drives the velocity alignment in certain configurations. Indeed, in this
formulation the RHS can be written as
\begin{equation*} \frac{k}{N} \sum \limits_{j=1}^N \psi_{ij}
\Gamma(v_i,v_j) , \qquad \text{where} \quad \Gamma(v_i,v_j)=v_j- (v_i \cdot v_j)v_i \end{equation*}
is the orthogonal projection of vector $v_j$ on $v_i$. The term $\Gamma(v_i,v_j)$ is a nonlinear and non-symmetric
coupling term that plays a role on preserving the speed of the agents due to the property
$\Gamma(v_i,v_j) \cdot v_i=0$. This equation reduces to the Cucker-Smale model with constant speed when we take $\chi \to 0$, i.e.
\begin{equation} \label{C-S_unit} \dot{x}_i=v_i, \qquad \dot{v}_i=\frac{\bar{k}}{N}\sum \limits_{j=1}^N \psi_{ij}\Gamma(v_i,v_j), \qquad \text{where}\quad \bar{k}=\frac{k}{\gamma}. \end{equation}

This version of CS model with constant speed has been studied in \cite{ChHa1, ChHa2, HaKoZh}.
The idea of taking a coupling term $\Gamma(v_i,v_j)$ which takes the orthogonal projection
of $v_j$ on $v_i$ instead of $v_j-v_i$ goes back to a model of quantum synchronization
due to Lohe \cite{Lo1}. We should also note that other instances of C-S dynamics with a
non linear coupling $\Gamma(v_i, v_j)$ have been studied in the literature.
For instance, in \cite{HaHaKi, Ma} the authors studied a velocity coupling of the
type $\Gamma(v_i, v_j)=(v_j-v_i)|v_j-v_i|^{2(\gamma -1)}$ for a
constant $\gamma \in \left( \frac{1}{2},\frac{3}{2}\right)$, and gave conditions for flocking as well as collision avoidance.

For a start we define the diameters of positions, velocities, and spins in the usual way
\begin{equation*} D(x(t)) :=\max\limits_{1\leq i,j \leq N}|x_i(t) -x_j(t)|, \quad
D(v(t)) :=\max\limits_{1\leq i,j \leq N}|v_i(t) -v_j(t)|,
\quad D(s(t)) :=\max\limits_{1\leq i,j \leq N}|s_i(t) -s_j(t)| .\end{equation*}
Our point of view is to study the evolution of velocity diameters by means of studying
the maximal angles between velocities since these two are geometrically connected.
For this, if we assume that all velocities $v_i$ lie in a cone with opening (apex) angle $\pi/2$, then we
can study the evolution of angle between $v_i$ and $v_j$ by studying the inner product of these two velocities.
In this situation, by taking the minimum of the inner product of all velocity pairs, we can control the velocity diameter of the system.

Thus, given the considerations above, we introduce the
geometric factor of motion $\A(v)$ just as in \cite{ChHa1}, which was defined as
\begin{equation} \label{geom_factor} \A(v(t)):=\min \limits_{i \neq j} v_i(t) \cdot v_j(t). \end{equation}
When all the velocities lie inside a cone with angle in $\pi/2$, then $\A(v)>0$ and we can have some control of the velocity diameter. As a result, the geometric factor
plays a crucial role in identifying invariance velocity regions. Furthermore,
the pair of velocities that is the minimizer in the factor
$\A(v)$ is the same pair that maximizes the velocity diameter with
the exact relationship between these two quantities
being $\A(v(t))=1-\frac{1}{2}D(v(t))^2$ when $\A(v(t))>0$.
We now proceed with some definitions:
\begin{definition} Let $(x(t),v(t),s(t))$ be a smooth solution to
system \eqref{IS}, with initial data $(x_0,v_0,s_0)$ that satisfy conditions \eqref{IC}. The following
definitions hold:
\begin{itemize}
\item (Spin vanishing) The solution exhibits asymptotic vanishing of spins if and only if
\begin{equation*} D(s(t)) \to 0 \quad \text{as} \quad t \to \infty ,\end{equation*}
which due to average spin dissipation implies $s_i(t) \to 0$ for all $1\leq i \leq N$.
\item (Orientation invariance) We say that the solution exhibits orientational invariance if and only if
all velocity angles remain inside a cone with slant angle less than $\pi /2$
for all times. In other words, if there exists some $0<\beta <1$ such that
$\A (v(t))>\beta$ for all $t \geq 0$.
\item (Velocity alignment) The solution exhibits velocity alignment
if and only if $D(v(t))\to 0$ as $t \to \infty$.
\item (Asymptotic flocking) We say that the solution exhibits asymptotic flocking if
and only if we have velocity alignment and group coherence i.e.
\begin{equation*} D(v(t))\to 0 \quad \text{as} \quad t \to \infty, \qquad \exists \quad
0< D^\infty <\infty \quad \text{s.t.} \quad \sup \limits_{t\geq 0}D(x(t))<D^\infty .
\end{equation*} If this holds true for all initial data $(x_0,v_0,s_0)$ we speak
of \textsl{unconditional flocking}, otherwise if flocking is possible only for a
specific subset of initial conditions, then we speak of \textsl{conditional flocking}.
\end{itemize}
\end{definition}

In this article we want to address the following question:
\begin{itemize}
\item \textbf{Q1} : \emph{Under what assumptions in
terms of the parameters $\chi, \gamma, k$ of the problem, the communication matrix $\psi_{ij}$, and in terms
of the initial conditions, we can establish invariant
regions? Also, what are the assumptions we need to show velocity alignment or even emergence of flocks?}
\par
A more specific question we ask is the following:
\item \textbf{Q2} : \emph{Does the smallness of initial diameters of the state variables suffice for the proof of orientation invariance for the velocities and/or flocking, or do we need further assumptions on the parameters $k, \chi, \gamma$ in the model?}
\end{itemize}
\textbf{Notation:} For brevity we might have to suppress the dependence of any state variable from time $t$, especially when we are treating longer
formulas e.g. we may write $\dot{s}_i$ or $D(v)^2$ instead of
$\dot{s}_i(t)$ and $D(v(t))^2$ etc.

\subsection{State of the art and main Theorems}

We should start with the state of art results in the study of asymptotics for the IS model. So far,
the only known result is the one presented in \cite{HaKiKiSh} for weights that are multiplicative, i.e.
for weights that satisfy $\psi_{ij}=p_i p_j$ for any $1 \leq i,j \leq N$, and a given set $\{ p_i\}_{1 \leq i \leq N}$ of positive elements $p_i>0$. This result
was given in two important steps. In the first step, it is shown that for constant
weights (which is a more general assumption than that of multiplicative weights) we can deduce
asymptotic spin vanishing as is shown in the following:

\begin{proposition} \cite{HaKiKiSh} \label{Prop:Ha} Suppose
that parameters $k, \gamma, \chi$ are all positive. If $(x(t),v(t),s(t))$ is a solution to the
IS model with communication weights that are nonnegative constant and symmetric, then asymptotic spin vanishing occurs. In particular, we have that
\begin{equation*} \lim \limits_{t \to \infty}|s_i(t)|=0 \quad \text{and} \quad \lim \limits_{t \to \infty}|\dot{s}_i(t)|=0 .\end{equation*}
\end{proposition}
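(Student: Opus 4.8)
The plan is to build a Lyapunov functional that is monotone along the flow, extract from it that $\dot v_i\in L^2(0,\infty)$, and then convert this $L^2$-in-time decay into pointwise decay of the spins via the algebraic identity $|s_i|^2=\chi^2|\dot v_i|^2$ recorded among the identities used to derive \eqref{IS1}.

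First I would test the second-order velocity equation \eqref{IS1} against $\dot v_i$. Since $|v_i|$ is conserved (Proposition \ref{Prop:Properties}(i)) we have $v_i\cdot\dot v_i=0$, which kills both the term $\chi|\dot v_i|^2 v_i$ and the $(v_i\cdot v_k)v_i$ part of the coupling, leaving $\frac{\chi}{2}\frac{d}{dt}|\dot v_i|^2+\gamma|\dot v_i|^2=\frac{k}{N}\sum_k\psi_{ik}\,(v_k\cdot\dot v_i)$. Summing over $i$ and using that the weights are constant and symmetric, the right-hand side is an exact time derivative, $\sum_{i,k}\psi_{ik}\,(v_k\cdot\dot v_i)=\tfrac12\frac{d}{dt}\sum_{i,k}\psi_{ik}\,(v_i\cdot v_k)$, so that the functional
\begin{equation*}\mathcal E(t):=\frac{\chi}{2}\sum_{i=1}^{N}|\dot v_i(t)|^2-\frac{k}{2N}\sum_{i,k=1}^{N}\psi_{ik}\,v_i(t)\cdot v_k(t)\end{equation*}
obeys $\dot{\mathcal E}(t)=-\gamma\sum_i|\dot v_i(t)|^2\le 0$. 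Because $\psi_{ik}\ge 0$ and $v_i\cdot v_k\le|v_i||v_k|=1$, the potential part is bounded, hence $\mathcal E$ is bounded below and nonincreasing, so it converges; integrating the dissipation law gives $\int_0^\infty\sum_i|\dot v_i(t)|^2\,dt<\infty$, and $\mathcal E(t)\le\mathcal E(0)$ gives the time-uniform bound $\sum_i|\dot v_i(t)|^2\le\frac{2}{\chi}\mathcal E(0)+\frac{k\psi_M N}{\chi}$.

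Next I would transfer this to the spins. From $|s_i|^2=\chi^2|\dot v_i|^2$ we get at once $\int_0^\infty\sum_i|s_i(t)|^2\,dt<\infty$ and a uniform bound $\sup_{t\ge0}\max_i|s_i(t)|=:M_s<\infty$. Inserting this into the rewritten spin equation \eqref{Sp}, $\dot s_i=\frac{k}{N}\sum_j\psi_{ij}\,v_i\times v_j-\frac{\gamma}{\chi}s_i$, and using $|v_i\times v_j|\le 1$, bounds $|\dot s_i|$ uniformly, so $t\mapsto\sum_i|s_i(t)|^2$ has bounded derivative, i.e. is uniformly continuous; Barbalat's lemma then forces $\sum_i|s_i(t)|^2\to 0$, hence $|s_i(t)|\to 0$ for each $i$. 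For the derivative, differentiating \eqref{Sp} once more and using $|\dot v_j|=|s_j|/\chi\le M_s/\chi$ shows $|\ddot s_i|$ is uniformly bounded; since $s_i(t)\to 0$ and $\ddot s_i$ is bounded, the elementary fact that a convergent function with bounded second derivative has vanishing first derivative yields $|\dot s_i(t)|\to 0$, which completes the proof.

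The step I expect to be the crux is the exact-derivative identification in the second paragraph: it is precisely there that symmetry $\psi_{ik}=\psi_{ki}$ and time-independence of the weights are used. For genuinely time- or position-dependent weights the dissipation law instead picks up an extra term $-\frac{k}{2N}\sum_{i,k}\dot\psi_{ik}\,v_i\cdot v_k$ of indefinite sign, $\mathcal E$ need no longer be monotone, and this soft argument collapses — which is exactly why the proposition is confined to constant symmetric weights. The remaining ingredients (the a priori bounds on $|s_i|,\,|\dot s_i|,\,|\ddot s_i|$ and the two elementary asymptotic lemmas) are routine.
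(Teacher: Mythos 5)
Your proof is correct and is essentially the paper's argument: your functional $\mathcal{E}(t)$ coincides, up to the factor $kN/(2\chi)$ and the additive constant $\frac{k}{2N}\sum_{i,k}\psi_{ik}$, with the paper's dissipative quantity $\frac{\chi}{2}\E(t)+\frac{1}{k}\S(t)$ (recall $|s_i|=\chi|\dot v_i|$ and $|v_i-v_k|^2=2-2\,v_i\cdot v_k$), and the conclusion via time-integrability of $\sum_i|s_i|^2$, a uniform bound on $\dot s_i$, and Barbalat's lemma is the same. The only cosmetic differences are that you obtain the dissipation identity by testing the second-order equation \eqref{IS1} against $\dot v_i$ instead of differentiating $\frac{\chi}{2}\E+\frac{1}{k}\S$ directly, and that you deduce $\dot s_i\to0$ from boundedness of $\ddot s_i$ rather than from uniform continuity of $\dot{\S}$; both are equivalent routine steps.
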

\begin{proof}
A sketch of the main steps in the proof of Proposition \ref{Prop:Ha} is possible in one paragraph. First, we introduce the following
functionals \begin{equation*} \E(t):=\frac{1}{N^2}\sum \limits_{i,j}
\psi_{ij}|v_i-v_j|^2, \qquad \S(t):=\frac{1}{N}\sum \limits_{i}|s_i|^2 . \end{equation*}
We can show with a direct computation that $\frac{\chi}{2}\E(t)+\frac{1}{k}\S(t)$ is dissipative for symmetric, and constant
communication weights by showing that
\begin{equation*}\frac{d}{dt}\left( \frac{\chi}{2}\E(t)+\frac{1}{k}\S(t) \right)+\frac{2 \gamma}{\chi k}\S(t)=0, \qquad t>0.
\end{equation*} This in turn implies the a priori energy estimate
\begin{equation*} \frac{\chi}{2}\E(t)+\frac{1}{k}\S(t) +\frac{2\gamma}{\chi k}\int_0^t \S(s)
\, ds \leq \frac{\chi}{2}\E(0)+\frac{1}{k}\S(0), \end{equation*}
or equivalently \begin{equation} \label{S-integr} \int_0^{\infty} \S(t)\, dt \leq \frac{\chi k}{2 \gamma}
 \left( \frac{\chi}{2} \E(0)+\frac{1}{k} \S(0) \right) .\end{equation} The next step is
to show that $\dot{\S}(t)$ is uniformly bounded, and as a result $\S(t)$ is uniformly continuous. Then,
with the help of Barbalat's lemma \cite{Bar} the integrability of $\S(t)$ together with its uniform continuity implies that $\S(t)\to 0$ as $t \to \infty$. Similarly, it can be shown that $\dot{\S}(t)$ is also uniformly continuous and since $\S(t) \to 0$ it also follows that $\dot{\S}(t) \to 0$. \end{proof}

We now impose the additional assumption of multiplicativity in the communication weights.
Then, with the help of Proposition \ref{Prop:Ha} and assumptions on the smallness of initial data $\E(0), \S(0)$,
it was proven in \cite{HaKiKiSh} that the IS system reaches a 2-flock or 1-flock formation asymptotically in time.
For this it helps to introduce the following averaging quantities
\begin{equation*} p_c:=\frac{1}{N}\sum \limits_{i=1}^N p_i \qquad \bar{v}_c:=\frac{1}{N}\sum \limits_{i=1}^N p_i v_i .\end{equation*}
The following main result was proven in \cite{HaKiKiSh} (see Theorem 4.1):

\begin{proposition} \cite{HaKiKiSh} \label{Theorem:Ha}
In addition to the assumptions made in Proposition \ref{Prop:Ha} we assume that
the communication weights are also multiplicative, i.e. that $\psi_{ij}=p_i p_j$ where $p_i>0$ for any
$1 \leq i,j \leq N$. Then the following statements hold :
\\ (i) If the initial data $(x_0,v_0,s_0)$ satisfies the condition
\begin{equation*} \E(0)+\frac{2}{\chi k}\S(0) <2 p_c^2,
\end{equation*}
then each velocity $v_i$ converges to either $\bar{v}_c /|\bar{v}_c |$ or $-\bar{v}_c /|\bar{v}_c |$.
\\ (ii)  If the initial data $(x_0,v_0,s_0)$ satisfies the more restrictive condition
\begin{equation*} \E(0)+\frac{2}{\chi k}\S(0)< \min \limits_{1 \leq i \leq N}
\left\{ \frac{8p_i(Np_c -p_i)}{N^2}\right\}, \end{equation*}
then we have velocity alignment and all velocities converge to $\bar{v}_c /|\bar{v}_c |$.
\end{proposition}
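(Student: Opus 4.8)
The plan is to use the multiplicative structure $\psi_{ij}=p_ip_j$ to reduce the whole problem to the two scalars $\E,\S$ and the single vector $\bar v_c$. The key algebraic fact, valid because each speed equals $1$, is obtained by expanding $|v_i-v_j|^2=2-2\,v_i\cdot v_j$:
\begin{equation*}
\E(t)=\frac1{N^2}\sum_{i,j}p_ip_j|v_i-v_j|^2=2\bigl(p_c^2-|\bar v_c(t)|^2\bigr).
\end{equation*}
In particular $\E$ is automatically $C^1$ along solutions, and using $v_i\times\bar v_c=\tfrac1{kp_i}\bigl(\dot s_i+\tfrac\gamma\chi s_i\bigr)$ (which is \eqref{Sp} with $\psi_{ij}=p_ip_j$) a direct computation confirms that the dissipation identity behind Proposition~\ref{Prop:Ha} survives, namely $\frac{d}{dt}\bigl(\tfrac\chi2\E+\tfrac1k\S\bigr)=-\tfrac{2\gamma}{\chi k}\S\le0$. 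Hence $\tfrac\chi2\E(t)+\tfrac1k\S(t)\le\mathcal F_0:=\tfrac\chi2\E(0)+\tfrac1k\S(0)$ for all $t$, which by the identity above reads $|\bar v_c(t)|^2\ge p_c^2-\mathcal F_0/\chi=:\delta$, and $\delta>0$ is \emph{precisely} hypothesis (i); so $\hat v_c:=\bar v_c/|\bar v_c|$ is well defined and $C^1$ for all $t$. Moreover the Barbalat argument behind Proposition~\ref{Prop:Ha} applies equally here, since it relies only on the dissipation identity and on the boundedness of the relevant derivatives, both of which hold for multiplicative weights; it gives $\int_0^\infty\S\,dt<\infty$, $\S(t)\to0$, $|s_i(t)|\to0$ and $|\dot s_i(t)|\to0$. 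Since $\tfrac\chi2\E+\tfrac1k\S$ is monotone, $\E(t)$ converges to some $\E_\infty\ge0$, whence $|\bar v_c(t)|^2\to m^2:=p_c^2-\E_\infty/2\in(0,p_c^2]$.

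\emph{Part (i).} From $kp_i\,(v_i\times\bar v_c)=\dot s_i+\tfrac\gamma\chi s_i$ together with $\dot s_i(t)\to0$ and $s_i(t)\to0$ we read off $v_i(t)\times\bar v_c(t)\to0$ for every $i$. Since $|v_i\times\bar v_c|^2=|\bar v_c|^2\bigl(1-(v_i\cdot\hat v_c)^2\bigr)\ge\delta\bigl(1-(v_i\cdot\hat v_c)^2\bigr)$, this forces $(v_i\cdot\hat v_c)^2\to1$; and because $t\mapsto v_i(t)\cdot\hat v_c(t)$ is continuous while $[T,\infty)$ is connected, $v_i\cdot\hat v_c$ must be eventually of one fixed sign. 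Hence, for each $i$, the angle between $v_i(t)$ and $\bar v_c(t)$ tends to $0$ or to $\pi$, i.e.\ $v_i(t)-\hat v_c(t)\to0$ or $v_i(t)+\hat v_c(t)\to0$; this is the assertion. (Promoting this to convergence of each $v_i$ to one fixed unit vector is straightforward in the fully aligned regime, where linearising \eqref{IS1} about the aligned configuration gives a damped harmonic oscillator and hence exponential relaxation, but is delicate near a two-cluster limit, which is only neutrally stable; this is why (i) is phrased as a dichotomy.)

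\emph{Part (ii).} Write $I_\pm:=\{\,i:\ v_i\cdot\hat v_c\to\pm1\,\}$, a partition of $\{1,\dots,N\}$, and $P_\pm:=\sum_{i\in I_\pm}p_i$, so $P_++P_-=Np_c$. Since $|\bar v_c(t)|=\bar v_c(t)\cdot\hat v_c(t)=\tfrac1N\sum_ip_i\,(v_i(t)\cdot\hat v_c(t))$ tends to $\tfrac1N(P_+-P_-)$ as $t\to\infty$, we get $m=(P_+-P_-)/N$, hence $P_+>P_-$ (because $m>0$) and in particular $I_+\neq\varnothing$; therefore
\begin{equation*}
\E_\infty=2\bigl(p_c^2-m^2\bigr)=2\Bigl(p_c^2-\tfrac{(P_+-P_-)^2}{N^2}\Bigr)=\frac{8\,P_+P_-}{N^2},
\end{equation*}
using $P_++P_-=Np_c$. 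Suppose, towards a contradiction, that $I_-\neq\varnothing$ as well. Then $P_-\in[\,\min_jp_j,\ Np_c-\min_jp_j\,]$, and since $x\mapsto x(Np_c-x)$ is concave it attains its minimum over that interval at an endpoint, so
\begin{equation*}
P_+P_-=P_-(Np_c-P_-)\ \ge\ \min_jp_j\,(Np_c-\min_jp_j)\ \ge\ \min_i\,p_i(Np_c-p_i).
\end{equation*}
Combined with $\E_\infty\le\E(0)+\tfrac2{\chi k}\S(0)$ (from monotonicity of $\tfrac\chi2\E+\tfrac1k\S$ and $\S(t)\to0$), this forces
\begin{equation*}
\min_i\frac{8p_i(Np_c-p_i)}{N^2}\ \le\ \frac{8P_+P_-}{N^2}=\E_\infty\ \le\ \E(0)+\frac{2}{\chi k}\S(0)\ <\ \min_i\frac{8p_i(Np_c-p_i)}{N^2},
\end{equation*}
a contradiction. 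Hence $I_-=\varnothing$, every $v_i$ aligns with $\hat v_c$, $\E_\infty=0$, and since $\psi_{ij}=p_ip_j\ge(\min_jp_j)^2$ one has $D(v(t))^2\le\frac{N^2}{(\min_jp_j)^2}\E(t)\to0$, i.e.\ velocity alignment.

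\emph{Main difficulty.} The crux is the first paragraph's observation that for multiplicative weights $\tfrac\chi2\E+\tfrac1k\S$ remains a Lyapunov functional — via the identity $\E=2(p_c^2-|\bar v_c|^2)$ — because this single identity simultaneously delivers the coercive bound $|\bar v_c|\ge\sqrt\delta$ (keeping $\hat v_c$ nondegenerate) and, through Barbalat, the vanishing of $s_i$ and of $\dot s_i$ on which the alignment argument rests. The second, more technical, point is passing from asymptotic parallelism of the velocities to control of the limiting configuration: the one-cluster limit is exponentially attracting and causes no trouble, whereas the two-cluster limit sits on the boundary of its basin of attraction, which is exactly why part (ii) needs the sharper smallness condition on $\E(0)+\tfrac2{\chi k}\S(0)$ in order to exclude it.
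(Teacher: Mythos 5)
Your argument is correct. Note that the paper itself gives no proof of Proposition \ref{Theorem:Ha} beyond deferring to \cite{HaKiKiSh}, and your reconstruction is essentially the argument of that reference: the identity $\E=2(p_c^2-|\bar v_c|^2)$ (valid by unit speed), the dissipation of $\tfrac{\chi}{2}\E+\tfrac{1}{k}\S$ from Proposition \ref{Prop:Ha} (multiplicative weights are constant and symmetric, so that proposition applies verbatim), the resulting lower bound $|\bar v_c|^2\geq p_c^2-\mathcal F_0/\chi>0$, the relation $kp_i\,v_i\times\bar v_c=\dot s_i+\tfrac{\gamma}{\chi}s_i\to0$ forcing $(v_i\cdot\hat v_c)^2\to1$, and the cluster-mass counting $\E_\infty=8P_+P_-/N^2$ that rules out a two-cluster limit under the sharper condition in (ii). Your parenthetical caveat in part (i) is also apt: the statement is most naturally read as $v_i(t)\mp\hat v_c(t)\to0$ (asymptotic parallelism) rather than convergence to a fixed limit vector, which is what your proof -- and the cited reference -- actually establishes.
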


\begin{proof} The proof is presented in \cite{HaKiKiSh}.
\end{proof}

There are some remarks regarding Proposition \ref{Theorem:Ha} that we have to address before we go on with the presentation of the results of this work :

\begin{remark}
It is important to note that Proposition \ref{Theorem:Ha} applies only to a very restrictive class of
weights that adhere to the multiplicativity property. As it was pointed in \cite{HaKiKiSh}, this property appears to be relevant in fragmentation-coagulation processes of polymers, but it is still a very restrictive framework to work with in the general problem. In fact, even the class of constant communication weights is not wide enough to use in many modelling examples e.g. in cases where the interaction weights depend on the metric distance between agents, or
when the network topology changes in time etc.
\end{remark}

\begin{remark}
The convergence that is proven in Proposition \ref{Theorem:Ha} does not possess any explicit rate. Hence it is impossible to deduce coherence in group formation from this result alone, i.e. we don't have sufficient information to show flocking in the traditional way that was defined earlier. In many physically realistic cases we would like to have a way to show that when alignment in velocities happens, the group of particles remains bounded uniformly in time, or
at least, we have the vanishing of the velocity diameter in some explicit rate.
\end{remark}

\begin{remark}
The IS model has a natural tendency to induce orientation alignment in the particle group when the velocities and spins are not that far apart. This property of the
IS system becomes evident when we write the system as a second order equation for the velocities of agents. Unfortunately, the functional approach that is the cornerstone of Proposition \ref{Theorem:Ha} does not make any use of the structure that is ``build in'' the IS system. In this paper, our approach is to make full use of this structure.
\end{remark}

Having these considerations in mind, we are in position to state the main two results of this work.
First, we state a theorem that works for positive, constant and symmetric communication weights (not necessarily multiplicative). This theorem states that in such a case we have invariant regions for the velocity vectors and we can achieve velocity alignment in these regions but with no explicit rate.

\begin{theorem} \label{Theorem1} Let $(x(t),v(t),s(t))$ be a global smooth solution to
\eqref{IS}, with a symmetric and constant communication matrix $\{\psi_{ij}\}_{i,j=1}^N$,
and initial data $(x_0,v_0,s_0)$ that satisfy conditions \eqref{IC}. Suppose also that
$\psi_m=\min \limits_{1\leq i,j \leq N}
\psi_{ij}$, and $\psi_M=\max \limits_{1\leq i,j \leq N} \psi_{ij}$. Finally, we assume some $\delta_0 \in (0,1)$ s.t.
 the following two conditions hold: \begin{equation*} \A(v_0)>\delta_0 \qquad \text{and} \qquad \C_0 <1- \delta_0 , \end{equation*} where
the exact form of $\C_0:=\C(D(v_0),\dot{D}(v_0),D(s_0),s_0,\gamma,k,\chi)$ is given by the following formulas :
\\(i) If $\gamma > \sqrt{8k \chi \psi_m \delta_0 }: $
\begin{equation*} \C_0 :=\frac{1}{2} D(v_0)^2 +\frac{\chi}{\gamma \sqrt{1- 8k \chi \psi_m \delta_0 / \gamma^2}}
\left( D(v_0)|\dot{D}(v_0)|+\frac{1}{2} \nu_1 D(v_0)^2 \right) + \frac{2 k N}{\gamma \sqrt{2 k \chi \psi_m \delta_0}} \left( \frac{\chi}{2}\E(0)+\frac{1}{k} \S(0)\right)\end{equation*}
\\(ii) If $\gamma \leq \sqrt{8k \chi \psi_m \delta_0 }:$
\begin{align*} \C_0:= \left\{ \begin{array}{ll}  & \frac{1}{2}D(v_0)^2 \left( 1+\frac{2\chi}{\gamma} \right)
+ \frac{4 k N}{\gamma^2}\left( \frac{\chi}{2}\E(0)+\frac{1}{k} \S(0)\right), \qquad \text{for}\quad \dot{D}(v_0) \geq 0 \\ \\
  & \frac{1}{2}D(v_0)^2 + \frac{4 k N}{\gamma^2}\left( \frac{\chi}{2}\E(0)+\frac{1}{k} \S(0)\right), \qquad \text{for}\quad \dot{D}(v_0) < 0 .\end{array} \right. \end{align*}
Then the particle system has the property $\A(v(t))>\delta_0$
for all $t\geq 0$, and the velocity diameters vanish asymptotically and $D(v)\to 0$ as $t \to \infty$.
\end{theorem}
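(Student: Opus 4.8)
The plan is to recast every conclusion of the theorem as a statement about the single scalar $Y(t):=D(v(t))^2$ and to control $Y$ by a second order differential inequality. Since $|v_i(t)|\equiv 1$ by Proposition~\ref{Prop:Properties}(i), the identity $\A(v(t))=1-\tfrac12 Y(t)$ holds for all $t$, so ``$\A(v(t))>\delta_0$'' means ``$Y(t)<2(1-\delta_0)$'' and ``$D(v(t))\to 0$'' means ``$Y(t)\to 0$''. Put $\T:=\sup\{t\ge 0:\A(v(s))>\delta_0\ \text{for all}\ s\in[0,t]\}$; by $\A(v_0)>\delta_0$ and continuity $\T>0$, and it suffices to show $\T=\infty$ and $Y(t)\to 0$. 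From Proposition~\ref{Prop:Ha} and its proof I will only use, for constant symmetric weights, the bound $\int_0^{\infty}\S(t)\,dt\le\tfrac{\chi k}{2\gamma}\bigl(\tfrac\chi2\E(0)+\tfrac1k\S(0)\bigr)$ and $\S(t)\to 0$; in addition, a one-line Gronwall estimate for $\tfrac{d}{dt}|s_i|^2$ from \eqref{Sp} gives $\sup_{t\ge 0}\max_i|s_i(t)|<\infty$, hence $\sup_{t\ge 0}\max_i|\dot v_i(t)|<\infty$.

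The core is the differential inequality on $[0,\T)$. Fix $t$, pick a pair $(i,j)$ that realizes $D(v(t))$, and differentiate $|v_i-v_j|^2$ twice along the second order velocity equation \eqref{IS1}. Three facts make the computation collapse. (a) Because $\dot v_i\perp v_i$ and $s_i\cdot v_i=0$, one has $\chi|\dot v_i|^2=\tfrac1\chi|s_i|^2$, so the $\chi|\dot v_i|^2 v_i$ term is controlled by $\S$, and $|\dot v_i-\dot v_j|^2\le 2|\dot v_i|^2+2|\dot v_j|^2\le\tfrac{2N}{\chi^2}\S(t)$. (b) $(v_i-v_j)\cdot(|\dot v_i|^2 v_i-|\dot v_j|^2 v_j)=(|\dot v_i|^2+|\dot v_j|^2)(1-v_i\cdot v_j)\ge 0$, which has the sign needed to be discarded. (c) The coupling term equals $\tfrac kN\sum_\ell[\psi_{i\ell}(v_i-v_j)\cdot\Gamma(v_i,v_\ell)-\psi_{j\ell}(v_i-v_j)\cdot\Gamma(v_j,v_\ell)]$; using $(v_i-v_j)\cdot\Gamma(v_i,v_\ell)=(v_i\cdot v_j)(v_i\cdot v_\ell)-v_j\cdot v_\ell$, its mirror identity, and --- crucially --- that the weights are \emph{constant}, so that $\psi_{i\ell}=\psi_{j\ell}$ cancels all cross terms, this becomes $-\tfrac{k\psi_m}{2N}D(v)^2\sum_\ell(v_i\cdot v_\ell+v_j\cdot v_\ell)$, which on $[0,\T)$ is $\le-k\psi_m\delta_0 D(v)^2$ since $v_i\cdot v_\ell,v_j\cdot v_\ell\ge\A(v)>\delta_0$ there. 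Treating $t\mapsto Y(t)$ as the maximum of finitely many smooth functions (so that the inequality holds in the Dini/a.e. sense, with corner terms accounted for as in Section~\ref{Sec:Gron}), one assembles (a)--(c) and $\tfrac{d^2}{dt^2}D(v)^2=2|\dot v_i-\dot v_j|^2+2(v_i-v_j)\cdot(\ddot v_i-\ddot v_j)$ into, on $[0,\T)$,
\begin{equation*}
\chi\,\ddot Y(t)+\gamma\,\dot Y(t)+2k\psi_m\delta_0\,Y(t)\ \le\ \frac{4N}{\chi}\,\S(t)=:g(t),\qquad g\ge 0,\quad \int_0^{\infty}g<\infty,\quad g(t)\to 0.
\end{equation*}
(Done for $D(v)$ instead of $Y=D(v)^2$ this is the genuinely nonlinear inequality of the abstract, with a $D(v)^{-1}$ term; the $Y$-form is what I use and is covered by the same lemmas.)

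Next I would integrate this inequality via the second order Gronwall lemmas of Section~\ref{Sec:Gron}. The characteristic polynomial $\chi r^2+\gamma r+2k\psi_m\delta_0$ is overdamped precisely when $\gamma^2>8k\chi\psi_m\delta_0$ (case (i)): two distinct negative real roots $-\nu_1>-\nu_2$ with $\nu_2-\nu_1=\tfrac1\chi\sqrt{\gamma^2-8k\chi\psi_m\delta_0}$, and variation of parameters gives a supremum bound in which the homogeneous part contributes the $\tfrac12 D(v_0)^2$ and $D(v_0)|\dot D(v_0)|$ terms (weighted by $\tfrac1{\nu_2-\nu_1}=\tfrac{\chi}{\gamma\sqrt{1-8k\chi\psi_m\delta_0/\gamma^2}}$, whence that factor; note $\dot Y(0)=2D(v_0)\dot D(v_0)$) and the convolution of the decaying Green's kernel with $g$ contributes the term proportional to $\tfrac\chi2\E(0)+\tfrac1k\S(0)$, using $\int_0^{\infty}g<\infty$. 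When $\gamma^2\le 8k\chi\psi_m\delta_0$ (case (ii)) the roots no longer factor favourably, and one uses the cruder route of integrating once, $\tfrac{d}{dt}(\chi\dot Y+\gamma Y)\le g$, then a first order inequality for $Y$; since this involves $\chi\dot Y(0)+\gamma Y(0)=2\chi D(v_0)\dot D(v_0)+\gamma D(v_0)^2$, the bound depends on the sign of $\dot D(v_0)$ --- exactly the dichotomy in $\C_0$. In both regimes one obtains $\sup_{[0,\T)}\tfrac12 Y(t)\le\C_0$, and since $\C_0<1-\delta_0$ this gives $\A(v(t))=1-\tfrac12 Y(t)\ge 1-\C_0>\delta_0$ on $[0,\T)$, i.e. $\A(v(\cdot))$ stays strictly above $\delta_0$; hence $\T<\infty$ would contradict the maximality of $\T$ by continuity, so $\T=\infty$ --- the orientation invariance. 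The inequality now holds on $[0,\infty)$, and since $g\ge 0$, $\int_0^{\infty}g<\infty$ and $g(t)\to 0$, the same lemmas (or integrating once plus a Barbalat argument, using that $Y$ is bounded with bounded derivative) force $Y(t)\to 0$, i.e. $D(v(t))\to 0$, with no explicit rate since $g$ itself carries none.

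I expect the decisive step to be the clean derivation of the boxed inequality --- in particular verifying that the coupling term delivers exactly the damping $-k\psi_m\delta_0 D(v)^2$ with no leftover positive contribution, and handling the second derivative of the Lipschitz maximum $Y$ rigorously. Constancy of the communication matrix is essential here: for a merely time-varying matrix satisfying \eqref{PsiAssum1} the edge differences $\psi_{i\ell}-\psi_{j\ell}$ would survive and could produce a positive $O(D(v))$ term, obstructing alignment --- which is why Theorem~\ref{Theorem1} is stated for constant weights. A secondary, purely computational difficulty is propagating all constants through the two damping regimes so that the supremum bound is exactly $\C_0$ rather than merely $O(\C_0)$.
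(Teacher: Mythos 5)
Your overall architecture is the paper's: the second--order differential inequality for $D(v)^2$ (this is Proposition \ref{Prop:Vel-diam}, inequality \eqref{BasIneq1}), control of the source term by $\S(t)$ together with the integrability $\int_0^\infty \S\,dt<\infty$ from Proposition \ref{Prop:Ha}, the Gronwall estimates of Section \ref{Sec:Gron} (Lemma \ref{Lemma:Gron1} and Corollary \ref{Corollary:Gron1}, whose two discriminant cases are exactly the dichotomy $\gamma\gtrless\sqrt{8k\chi\psi_m\delta_0}$ in $\C_0$), the continuity/contradiction argument giving $T_*=\infty$, and the vanishing of $D(v)$ from $g(t)\to0$ via Lemma \ref{Lemma:Integr-vanish}. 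The minor constant discrepancies (your sharper bound $g\le\tfrac{4N}{\chi}\S$ versus the paper's, and your ``integrate once'' route in the non-overdamped case) are harmless, since any supremum bound not exceeding $\C_0$ proves the statement.

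There is, however, one genuine flaw in your step (c), which you yourself identify as the decisive step. ``Constant communication matrix'' means constant \emph{in time}; it does not mean that all entries coincide. The theorem explicitly allows $\psi_m=\min_{i,j}\psi_{ij}<\max_{i,j}\psi_{ij}=\psi_M$, so in general $\psi_{i\ell}\neq\psi_{j\ell}$ and your cancellation of the cross terms does not occur. Your closing remark --- that surviving differences $\psi_{i\ell}-\psi_{j\ell}$ would produce an obstructing positive term --- confirms that your derivation, as written, only covers the special case where every entry equals the same number. The paper's Proposition \ref{Prop:Vel-diam} closes this gap differently: choosing $(i,j)$ to be the pair minimizing $v_i\cdot v_j$ (equivalently maximizing $|v_i-v_j|$, since all speeds are $1$), each summand $v_\ell\cdot v_j-(v_\ell\cdot v_i)(v_i\cdot v_j)$ is \emph{nonnegative} (because $v_\ell\cdot v_j\ge v_i\cdot v_j$ and $|v_\ell\cdot v_i|\le1$), so one may replace $\psi_{i\ell}$ and $\psi_{j\ell}$ by the common lower bound $\psi_m$ \emph{before} summing; after that substitution your algebra goes through verbatim and yields $\I_2\le -k\psi_m D(v)^2\A(v)$. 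Note that this argument needs no constancy at all --- indeed the same inequality is used in Theorem \ref{Theorem2} for time-varying weights satisfying \eqref{PsiAssum1}. Constancy of the weights enters Theorem \ref{Theorem1} only through Proposition \ref{Prop:Ha}, i.e.\ through the dissipation identity that makes $\S(t)$ integrable and vanishing, which is precisely what your source term $g$ requires.
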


The second result that we prove is a stronger result that can be achieved in the ``Small Inertia'' regime, and
gives exponentially fast alignment of velocities under certain assumptions on the initial configurations.

\begin{theorem} \label{Theorem2} Let $(x(t),v(t),s(t))$ be a global smooth solution to the IS system \eqref{IS},
with initial data $(x_0,v_0,s_0)$ that satisfy \eqref{IC}. We have also assumed that the communication matrix in \eqref{IS}
satisfies $\psi_m \leq \psi_{ij}(t) \leq \psi_M$ for all $1 \leq i,j \leq N$ and $t \geq 0$,
and that $\A(v_0)>\delta_0$ for some $\delta_0 \in (0,1)$.
Finally, we assume that the initial configurations and parameters satisfy the conditions \begin{equation*} (H1)
\quad \gamma  > \sqrt{\frac{6 k \chi \psi_M}{\delta_0 \psi_m} } ,\qquad \qquad (H2)
\quad \C_0 < 1- \delta_0 ,\end{equation*}
where the exact form of $\C_0$ is given by
\begin{equation*} \C_0:= \frac{1}{2}D(v_0)^2
+\frac{\chi D(v_0) |\dot{D}(v_0)|}{\gamma}+\frac{2}{\gamma ^2}\left(D(s_0)^2+
2\max \limits_{1 \leq i \leq N}|s_{i0}|^2 \right) .\end{equation*} Then the particle flow exhibits orientation invariance
with $\A(v(t))>\delta_0$ for $t\geq 0$. Furthermore, we have the estimates
\begin{align} \label{D(v)_estim} D(v(t))\leq \left\{ \begin{array}{ll}    O(e^{-\frac{\mu_{*}}{2}t}), \qquad \text{for}\quad \mu_* < \frac{\gamma}{2\chi}, \\ \\
 O(\sqrt{t}e^{-\frac{\gamma}{4 \chi}t}), \qquad \text{for}\quad \mu_* \geq \frac{\gamma}{2\chi} , \end{array} \right. \end{align}
where $\mu_*=\sup \D$ for the non-empty set
\begin{equation} \label{D_set} \D=\left\{0< \mu <\frac{\gamma}{\chi} \quad : \quad \mu^3 -\frac{2\gamma}{\chi}\mu^2 +\frac{2k \chi \psi_m \delta_0 +\gamma^2}{\chi^2}\mu \leq \frac{2k}{\chi}\left( \frac{\gamma}{\chi}\psi_m \delta_0 -\frac{6k}{\gamma} \psi_M \right) \right\} . \end{equation}
\end{theorem}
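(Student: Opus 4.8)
The plan is to run a continuity (bootstrap) argument around a second--order nonlinear Gronwall inequality for the velocity diameter, in the spirit of Choi--Ha--Yun. Write $Y(t):=D(v(t))^2$ and $M(t):=\max_{1\le i\le N}|s_i(t)|$, and set
\begin{equation*}
T^\ast:=\sup\{T>0:\ \A(v(t))>\delta_0\ \text{for all}\ t\in[0,T)\}.
\end{equation*}
Since $\A(v_0)>\delta_0$ we have $T^\ast>0$; on $[0,T^\ast)$ Proposition~\ref{Prop:Properties}(i) gives $|v_i|\equiv1$, hence $\A(v(t))=1-\tfrac12Y(t)$, so it suffices to prove $\tfrac12Y(t)\le\C_0$ on $[0,T^\ast)$: combined with (H2) this keeps $\A(v)$ strictly above $\delta_0$ and forces $T^\ast=\infty$, and the very same differential inequalities then deliver the decay rate.

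The first step is the diameter inequality. Fix $t\in[0,T^\ast)$, let $(i_0,j_0)$ realise $D(v(t))$ and put $w:=v_{i_0}-v_{j_0}$, so $|w|^2=Y$ and $w\cdot v_{i_0}=-w\cdot v_{j_0}=\tfrac12Y$. Differentiating $|w|^2$ twice, using the second--order form \eqref{IS1} together with the identity $w\cdot(|\dot v_{i_0}|^2v_{i_0}-|\dot v_{j_0}|^2v_{j_0})=\tfrac12Y(|\dot v_{i_0}|^2+|\dot v_{j_0}|^2)\ge0$, one arrives at
\begin{equation*}
\tfrac{\chi}{2}\,\tfrac{d^2}{dt^2}Y+\tfrac{\gamma}{2}\,\tfrac{d}{dt}Y\ \le\ \chi|\dot w|^2+w\cdot(C_{i_0}-C_{j_0}),\qquad C_i:=\tfrac{k}{N}\sum_{l}\psi_{il}\,\Gamma(v_i,v_l),
\end{equation*}
with the second derivative read as an upper Dini derivative (the maximising pair switches at isolated times only). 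The key point — here the alignment mechanism ``built in'' the IS model through \eqref{IS1} is used, and the first--order monotonicity of Choi--Ha is upgraded to second order — is that for the diameter pair the \emph{entire} coupling term is dissipative, with no leftover term even for non--constant weights: writing $\mu:=v_{i_0}\cdot v_{j_0}=\A(v)$, $p:=v_{i_0}\cdot v_l$, $q:=v_{j_0}\cdot v_l$ one computes $w\cdot\Gamma(v_{i_0},v_l)=\mu p-q$ and $w\cdot\Gamma(v_{j_0},v_l)=p-\mu q$, so
\begin{equation*}
w\cdot(C_{i_0}-C_{j_0})=-\tfrac{k}{N}\sum_{l}\big(\psi_{i_0l}(q-\mu p)+\psi_{j_0l}(p-\mu q)\big);
\end{equation*}
minimality of $(i_0,j_0)$ gives $p,q\ge\mu$, hence $q-\mu p\ge0$, $p-\mu q\ge0$ and $(q-\mu p)+(p-\mu q)=(p+q)(1-\mu)\ge2\mu(1-\mu)=\mu Y$, which yields $w\cdot(C_{i_0}-C_{j_0})\le-k\psi_m\mu Y\le-k\psi_m\delta_0Y$ after bounding the weights below by $\psi_m$. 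For the remaining term, $\chi\dot v_i=s_i\times v_i$ gives $\chi(\dot v_{i_0}-\dot v_{j_0})=(s_{i_0}-s_{j_0})\times v_{i_0}+s_{j_0}\times(v_{i_0}-v_{j_0})$, so $\chi^2|\dot w|^2\le2D(s)^2+2M^2Y$. Collecting terms,
\begin{equation*}
\chi\,\ddot Y+\gamma\,\dot Y+2k\psi_m\delta_0\,Y\ \le\ \tfrac{4}{\chi}D(s)^2+\tfrac{4}{\chi}M^2Y\qquad\text{on }[0,T^\ast).
\end{equation*}

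Next I would close the loop with the spins. From \eqref{Sp} and $|v_i\times v_j|\le|v_i-v_j|\le\sqrt{Y}$ one gets $\tfrac12\tfrac{d}{dt}|s_i|^2\le k\psi_M|s_i|\sqrt{Y}-\tfrac{\gamma}{\chi}|s_i|^2$ and, at the maximising spin pair, the analogous estimate for $|s_{i_0}-s_{j_0}|^2$; taking maxima and absorbing the cross terms gives first--order damped inequalities $\tfrac{d}{dt}M^2+\tfrac{\gamma}{\chi}M^2\le\kappa_\ast Y$ and $\tfrac{d}{dt}D(s)^2+\tfrac{\gamma}{\chi}D(s)^2\le\kappa_\ast Y$ on $[0,T^\ast)$, for an explicit positive $\kappa_\ast$. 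Now suppose, for contradiction, $T^\ast<\infty$; then $Y(T^\ast)=2(1-\delta_0)$ and $\dot Y(T^\ast)\ge0$. Integrating the diameter inequality on $[0,T^\ast]$, using $M^2Y\le2(1-\delta_0)M^2$ and substituting the integrated spin inequalities (which turn $\int_0^{T^\ast}D(s)^2$, $\int_0^{T^\ast}M^2$ into $D(s_0)^2$, $M(0)^2$ up to multiples of $\int_0^{T^\ast}Y$), one reaches
\begin{equation*}
\tfrac12Y(T^\ast)+\tfrac{\chi}{2\gamma}\dot Y(T^\ast)+\Big(\tfrac{k\psi_m\delta_0}{\gamma}-\kappa\Big)\!\int_0^{T^\ast}\!\!Y\ \le\ \tfrac12Y(0)+\tfrac{\chi}{2\gamma}|\dot Y(0)|+\tfrac{2}{\gamma^2}\big(D(s_0)^2+2M(0)^2\big)=\C_0,
\end{equation*}
using $D(v_0)|\dot D(v_0)|=\tfrac12|\dot Y(0)|$, where $\kappa$ is a positive constant coming out of the bookkeeping. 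Condition (H1) is exactly what guarantees $\tfrac{k\psi_m\delta_0}{\gamma}-\kappa\ge0$ — equivalently, that the right--hand side of the inequality defining $\D$ in \eqref{D_set} is strictly positive at $\mu=0$. Dropping the two nonnegative left--hand terms and invoking (H2) gives $1-\delta_0=\tfrac12Y(T^\ast)\le\C_0<1-\delta_0$, a contradiction; hence $T^\ast=\infty$, $\A(v(t))>\delta_0$ for all $t$, and the same computation with free upper limit yields $\tfrac12Y(t)\le\C_0$ for all $t\ge0$.

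Finally, for the rate I would look for an exponential super--solution $Y(t)\le Ae^{-\mu t}$ of the global system above. Matching decay exponents across the three coupled inequalities, the velocity part contributes the symbol $\chi\mu^2-\gamma\mu+2k\psi_m\delta_0=\chi(\mu-\lambda_-)(\mu-\lambda_+)$ with $\lambda_\pm=\tfrac{\gamma\pm\sqrt{\gamma^2-8k\chi\psi_m\delta_0}}{2\chi}$, while the spin relaxation contributes the factor $\tfrac{\gamma}{\chi}-\mu$ from the Duhamel convolution against $e^{-\frac{\gamma}{\chi}t}$; feasibility of the rate $\mu$ then reduces exactly to $(\tfrac{\gamma}{\chi}-\mu)(\mu-\lambda_-)(\mu-\lambda_+)\ge\tfrac{12k^2\psi_M}{\gamma\chi}$, i.e. to $\mu\in\D$ of \eqref{D_set}. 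By (H1) this inequality holds with strict slack at $\mu=0$, so $\D\supset(0,\eps)$ for some $\eps>0$ and $\mu_\ast=\sup\D\in(0,\tfrac{\gamma}{\chi})$; comparing $Y$ with the super--solution through the generalized Gronwall lemmas of Section~\ref{Sec:Gron} gives $Y(t)=O(e^{-\mu_\ast t})$ when $\mu_\ast<\tfrac{\gamma}{2\chi}$, and in the resonant case $\mu_\ast\ge\tfrac{\gamma}{2\chi}$, where the forcing hits the slowest spin mode, $Y(t)=O(te^{-\frac{\gamma}{2\chi}t})$; taking square roots yields \eqref{D(v)_estim}. The main obstacle is the coupling estimate in the second step: one must show that the full, non--symmetric, time--dependent coupling term is dissipative with damping of order $k\psi_m\delta_0\,D(v)^2$ and carries \emph{no} residual forcing — the naive bound $|\psi_{i_0l}-\psi_{j_0l}|\le\psi_M-\psi_m$ would destroy precisely this and must be bypassed by keeping the minimality inequalities $p,q\ge\mu$ to the very end; the only other delicate point is making the Dini--derivative form of the second--order inequality rigorous across the finitely many, piecewise--analytic pair switches.
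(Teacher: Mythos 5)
Your proposal is correct and follows essentially the same route as the paper: it reconstructs the second-order diameter inequality of Proposition \ref{Prop:Vel-diam} (including the dissipative coupling bound $\I_2\le -k\psi_m\delta_0 D(v)^2$ and the spin closure), runs the same continuity/bootstrap argument to get $\tfrac12 D(v)^2\le\C_0<1-\delta_0$, and recovers the decay rates from exactly the cubic feasibility condition defining $\D$ in \eqref{D_set}, which is the content of Lemma \ref{Lemma:Gron2} and Corollary \ref{Corollary:Gron2}. The only differences are presentational (direct integration of the inequalities in place of the paper's Lyapunov functionals $L(t)$), and your constant bookkeeping checks out against the paper's $\C_0$ and hypothesis (H1).
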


\begin{remark} The way we have defined $\C_0$ in both theorems ensures that the
 $\C_0<1-\delta_0$ assumption is stronger than $\A(v_0)>0$, and hence $\A(v_0)>\delta_0$ could have been omitted. We have kept it because it signifies the importance of the geometric factor in the analysis, and it is also helpful, since we work with a continuity argument on $\A(v(t))$ to show that $\A(v(t))>\delta_0$ for all $t\geq 0$ in both proofs.
\end{remark}

\section{Some special cases of IS models} \label{Sec:Simpl}

In this brief Section we discuss some results for two
simplified instances of the IS model: Namely, the IS model on 2-d plane where it reduces to the inertial Kuramoto model, and the overdamped regime $\chi \to 0$, where the IS reduces to the Cucker-Smale with constant speed (for weights that are functions of the metric distance of particles).

\subsection{Inertial Spin on the plane and connection to the inertial Kuramoto model}

If we assume a motion restricted on a plane then the velocity vector $v_i$ can be expressed in polar form as
\begin{equation*} v_i=(\cos{\theta_i}, \sin{\theta_i}) ,\end{equation*}
for $\theta_i \in [0,2\pi)$. We Differentiate the velocity vector to get the first and second derivatives
\begin{equation*} \dot{v}_i=(-\sin{\theta_i}, \cos{\theta_i})\dot{\theta}_i  \qquad \text{and} \qquad
 \ddot{v}_i=(-\cos{\theta_i},-\sin{\theta_i})|\dot{\theta}_i|^2 +(-\sin{\theta_i},\cos{\theta_i})\ddot{\theta}_i.\end{equation*}
Substituting the derivatives of $v_i$ into equation \eqref{IS1}, then using
 the identity $\chi |\dot{v}_i|^2 v_i=\chi (\cos{\theta_i},\sin{\theta_i})|\dot{\theta}_i|^2$
and the fact that \begin{equation*} \sum \limits_{k=1}^N \psi_{ik} (v_k -(v_i \cdot v_k)v_i)= (-\sin{\theta_i},\cos{\theta})
\sum \limits_{k=1}^N \psi_{ik} \sin{(\theta_k -\theta_i)} ,\end{equation*}
we get the equation for the evolution of $\theta_i$, i.e.
\begin{equation} \label{Kuram_frict} \chi \ddot{\theta_i}+\gamma \dot{\theta_i}=\frac{k}{N}\sum \limits_{k=1}^N
\psi_{ik}\sin{(\theta_k -\theta_i)}, \quad 1 \leq i \leq N .\end{equation}
This is the Kuramoto model equation when inertial effects are involved, for a network topology where $\psi_{ij}:=a_{ij}$ is the coupling strength between oscillators $i$ and $j$, and with zero natural frequencies $\Omega_i=0$. This model is coupled with the set of initial conditions
\begin{equation} \label{Kuram_IC} \theta_i(0)=\theta_{i0}, \quad
\dot{\theta}_i(0)= \dot{\theta}_{i0}, \qquad 1 \leq i \leq N. \end{equation}
The inertial Kuramoto model has been studied extensively in many works e.g. \cite{ChHaJuSl, ChHaMo, ChHaNo, ChHaYu1, ChHaYu2, ChLiHaXuYu, HaLi}. In particular, we may consider the more general model
\begin{equation} \label{Kuram_In} m_i \ddot{\theta}_i+\gamma_i \dot{\theta}_i=\Omega_i +k \sum \limits_{j=1}^N a_{ij}\sin(\theta_j-\theta_i) , \quad t \geq 0, \quad i=1,\ldots,N ,\end{equation}
with prescribed initial data given by \eqref{Kuram_IC}. Here $m_i , \gamma_i$ are the inertia and damping parameters of the i'th oscillator respectively, $k$ is the coupling strength, and the $\Omega_i$'s are the natural frequencies of the oscillators which are independent to one another and each
one follows a common distribution $g(\Omega)$. The unknown function $\theta_i(t)$ corresponds to
the phase of the i'th oscillator, and its derivative $\omega_i(t):=\dot{\theta}_i(t)$ to its frequency.
We use the shorthand vector notation $(\theta(t),\omega(t)):=(\theta_1,\ldots,\theta_N,\omega_1,\ldots,\omega_N)$ and the initial data are represented by the vector $(\theta_0, \omega_0)$. We also define the diameters for the phases and frequencies by
\begin{equation*}  D_{\theta}(t):=\max \limits_{1\leq i \leq N}|\theta_j -\theta_i|, \qquad  D_{\omega}(t):=\max \limits_{1\leq i \leq N}|\omega_j -\omega_i| .\end{equation*}
We say that the system exhibits \textit{complete phase synchronization} if $D_{\theta}(t)\to 0$, and \textit{complete frequency synchronization} if $D_{\omega}(t)\to 0$ as $t \to \infty$.

The following result regarding the case of identical natural
frequencies, and homogeneous inertia and friction, was the
first result of its kind for \eqref{Kuram_In} that was proven in \cite{ChHaYu1}. Instead of using the formulation
of the theorem from \cite{ChHaYu1} (where the aggregate result is presented in two Theorems 4.1 \& 4.2), we choose the
more synoptic formulation in \cite{ChHaMo}:

\begin{proposition} \label{Theorem:ChHaYu1} \cite{ChHaMo}
Suppose that the parameters $m_i, \gamma_i, \Omega_i, k$ and initial data satisfy the following relations:
\begin{align*} & m_i=m, \quad \gamma_i=1, \quad a_{ij}=\frac{1}{N}, \quad \Omega_i=\Omega , \quad m, k>0 \\
& 0< \C_1(0) < \pi \qquad \text{and} \qquad mk \in \left(0,\frac{1}{4}\right)\cup \left(\frac{\C_1(0)}{4 \sin{\C_1(0)}},\infty \right), \end{align*}
where $\C_1(m,\theta(t),\dot{\theta}(t)):=\max\{ D_{\theta}(t), \, D_{\theta}(t)+m \dot{D}_{\theta}(t)\}$. Then,
for any solution $(\theta(t), \omega(t))$ to \eqref{Kuram_In}, there exists some positive constant $\Lambda_1>0$ s.t.
\begin{equation*} D_{\theta}(t)+D_{\omega}(t)\leq O(1)e^{-\Lambda_1 t} \quad \text{for}\quad t \to \infty.\end{equation*}

\end{proposition}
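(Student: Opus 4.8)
The plan is to run, for the scalar phase diameter $D_\theta$, the ``second order nonlinear Gronwall'' scheme that the present paper later transplants to the IS system. First I would eliminate the natural frequency: since $\Omega_i\equiv\Omega$, the substitution $\eta_i:=\theta_i-\Omega t$ turns \eqref{Kuram_In} into the same system with $\Omega=0$ and leaves $D_\theta,D_\omega$ unchanged, so assume $\Omega=0$, $a_{ij}=1/N$. While $D_\theta(t)<\pi$ the phases keep their cyclic order (a standard Gronwall/uniqueness argument ruling out collisions), so $D_\theta=\theta_M-\theta_m$ for well-defined extremal indices. Differentiating twice in the sense of upper Dini derivatives, using $\sum_j\sin(\theta_j-\theta_M)\le0$, $\sum_j\sin(\theta_j-\theta_m)\ge0$, the identity $\sin A-\sin B=2\cos\frac{A+B}2\sin\frac{A-B}2$, and the bound $\cos(\theta_j-\frac{\theta_M+\theta_m}2)\ge\cos\frac{D_\theta}2>0$ for $D_\theta<\pi$, I get the master inequality
\begin{equation*} m\,\ddot D_\theta+\dot D_\theta\le-\,k\sin D_\theta,\qquad 0\le D_\theta<\pi . \end{equation*}

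Second, the invariant region, via the functional $\C_1(t)=\max\{D_\theta(t),\,D_\theta(t)+m\dot D_\theta(t)\}$ of \cite{ChHaYu1}. A branch analysis gives $D^+\C_1(t)\le0$ whenever $D_\theta(t)<\pi$: on $\{\dot D_\theta<0\}$, $\C_1=D_\theta$ and $D^+\C_1=\dot D_\theta<0$; on $\{\dot D_\theta>0\}$, $\C_1=D_\theta+m\dot D_\theta$ and $D^+\C_1=\dot D_\theta+m\ddot D_\theta\le-k\sin D_\theta\le0$; at $\dot D_\theta=0$ the max rule plus the master inequality give $D^+\C_1\le\max\{0,m\ddot D_\theta\}=0$. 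Since $\C_1\ge D_\theta\ge0$ and $\C_1(0)<\pi$, a continuity/bootstrap argument yields $D_\theta(t)\le\C_1(t)\le\C_1(0)<\pi$ for all $t$; hence the master inequality holds globally. This already gives orientation confinement with no restriction on $mk$ — the $mk$ condition enters only for the rate.

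Third, exponential decay. On $\{D_\theta\le c\}$, $c:=\C_1(0)\in(0,\pi)$, monotonicity of $r\mapsto\sin r/r$ on $(0,\pi]$ upgrades the master inequality to $m\ddot D_\theta+\dot D_\theta+\kappa D_\theta\le0$ with $\kappa:=k\sin(c)/c$. If $mk<1/4$ then $m\kappa<1/4$, the polynomial $m\lambda^2+\lambda+\kappa$ has real negative roots $\lambda_-<\lambda_+<0$, and writing the inequality as $m(\partial_t-\lambda_-)(\partial_t-\lambda_+)D_\theta\le0$, setting $y:=\dot D_\theta-\lambda_+D_\theta$, and integrating the two resulting first-order inequalities, I obtain $D_\theta(t)\le O(1)e^{\lambda_+t}$ — a step that, crucially, needs no control on the sign of $\dot D_\theta$. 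If $mk>\C_1(0)/(4\sin\C_1(0))$, equivalently $m\kappa>1/4$, the factorization is complex and only closes up to the damping threshold $1/(2m)$: one pushes an admissible exponential rate $\mu$ as large as the analysis permits — the analogue of the set $\D$ in \eqref{D_set} — getting $D_\theta(t)\le O(e^{-\mu t})$ for $\mu<1/(2m)$ and $D_\theta(t)\le O(\sqrt{t}\,e^{-t/(4m)})$ at the borderline, exactly the dichotomy of \eqref{D(v)_estim}. The excluded window $mk\in[1/4,\,\C_1(0)/(4\sin\C_1(0))]$ is precisely where neither argument applies. Frequency synchronization then follows because the differentiated system gives $m\ddot D_\omega+\dot D_\omega+kD_\omega(1-O(D_\theta^2))\le0$; once $D_\theta$ is small this is a damped inequality of the same type as before, so $D_\omega(t)\le O(1)e^{-\Lambda_1 t}$, and adding the estimates gives $D_\theta(t)+D_\omega(t)\le O(1)e^{-\Lambda_1 t}$.

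The main obstacle is the strong-coupling (underdamped) regime. There the natural pendulum energy $\tfrac m2\dot D_\theta^2+k(1-\cos D_\theta)$ is not monotone, because multiplying the master inequality by $\dot D_\theta$ reverses it on $\{\dot D_\theta<0\}$, and the real factorization used in the overdamped case is unavailable; extracting a genuine exponential rate then needs either a more delicate weighted combination built from $\C_1$ or acceptance of the $\sqrt{t}$ correction exactly at the damping threshold — the same difficulty the present paper confronts for the IS model in Theorem \ref{Theorem2}.
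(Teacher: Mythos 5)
First, a point of orientation: the paper does not actually prove Proposition \ref{Theorem:ChHaYu1} --- it is quoted from \cite{ChHaMo} (originally Theorems 4.1--4.2 of \cite{ChHaYu1}) with no proof environment --- so the benchmark is the source's argument, whose key tool the paper imports as Lemma \ref{Lemma:Gron1}. Your skeleton is exactly that argument: reduce to $\Omega=0$, derive $m\ddot D_{\theta}+\dot D_{\theta}\le -k\sin D_{\theta}$ from the sum-to-product identity, confine $D_{\theta}$ below $\pi$ via the non-increasing functional $\C_1$, linearize $\sin$ on the invariant region and apply the overdamped/underdamped branches of the second-order Gronwall lemma, then bootstrap to $D_{\omega}$. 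In outline this is the right route and the same one the paper later transplants to the IS system.

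The genuine gap is the sentence ``while $D_{\theta}(t)<\pi$ the phases keep their cyclic order (a standard Gronwall/uniqueness argument ruling out collisions).'' That uniqueness argument is valid for the \emph{first-order} Kuramoto model, where $\theta_i(t_0)=\theta_j(t_0)$ forces $\theta_i\equiv\theta_j$; for the inertial model the state is $(\theta_i,\dot\theta_i)$, so two oscillators with equal phases but different frequencies do cross, and the extremal indices genuinely switch. The consequence is not cosmetic: at a switching time the overtaking oscillator has the larger frequency, so the one-sided derivative $D^{+}D_{\theta}$ jumps \emph{upward}, and the functional $\C_1=\max\{D_{\theta},\,D_{\theta}+mD^{+}D_{\theta}\}$ can jump up there. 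Your branchwise computation of $D^{+}\C_1\le 0$ is correct between switches but does not by itself prevent the accumulated jumps from pushing $\C_1$ past $\pi$; handling exactly this (careful Dini-derivative bookkeeping along the max/min trajectories, or working with the continuous quantity $D_{\omega}$ instead of $\dot D_{\theta}$) is the technical heart of \cite{ChHaYu1,ChHaMo} and cannot be dismissed by order preservation. Two further signs that something is off: (a) since you estimate $D_{\theta}$ itself rather than its square, the borderline bound should read $O(t\,e^{-t/(2m)})$, not $O(\sqrt{t}\,e^{-t/(4m)})$; and (b) with $\kappa=k\sin(\C_1(0))/\C_1(0)$ one has $m\kappa\le 1/4$ for every $mk\le \C_1(0)/(4\sin\C_1(0))$, so by your own Step 3 the real-root factorization covers the ``excluded window'' as well --- your remark that ``neither argument applies'' there contradicts your own decay argument, and indicates that the restriction on $mk$ in the cited theorem is tied to the invariance step you have not correctly established, not to the decay step.
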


\begin{remark}
The observant reader might have already noticed the similarity between the above result and Theorem \ref{Theorem2} of this article. It is important for instance to see that the initial configurations in Proposition \ref{Theorem:ChHaYu1} (as manifested by the assumption $0<\C_1(m,\theta_0, \omega_0)<\pi$) are chosen so that all initial phases are contained on the semi-circle. This implies a certain orientation that is assumed in initial conditions, and it is a common assumption in both Proposition \ref{Theorem:ChHaYu1} and Theorem \ref{Theorem2} (in Theorem \ref{Theorem2} this assumption is $\C_0 < 1- \delta_0$). On the other hand, there is also a major difference between the two results. Although Theorem \ref{Theorem2} makes the assumption of a ``small inertia'' regime with the condition $\gamma > \sqrt{\frac{6 k \chi \psi_M}{\delta_0 \psi_m} }$, Proposition \ref{Theorem:ChHaYu1} applies in both small and large inertia regions, as manifested by $mk \in \left(0,\frac{1}{4}\right)\cup \left(\frac{\C_1(0)}{4 \sin{\C_1(0)}},\infty \right)$. This shouldn't come as a surprise, as the 2-d setting of the IS system allows for a much sharper inequality for the phase diameter, than the one we derive in Section \ref{Sec:Veloc-diam} for the velocity diameter of the IS in 3-d. The problem of showing invariant regions for the velocities in the large inertia regime, when the communication matrix is not constant, remains open.
\end{remark}

\subsection{Formal zero inertial limit $\chi \to 0$}

In the overdamped regime, when the inertia becomes
negligible compared to the spin dissipation and $\psi_{ij}=\psi(|x_i-x_j|)$, the IS
model reduces to the CS model with constant speed. By taking the formal limit
$\chi \to 0$ (derivation could be made rigorous!) we get system \eqref{C-S_unit}, which we rewrite as
\begin{equation} \label{C-S_unit1}\dot{x}_i=v_i, \qquad \dot{v}_i=\frac{\bar{k}}{N}\sum \limits_{j=1}^N \psi_{ij}(v_j-(v_i \cdot v_j)v_i), \qquad \text{with}\quad \bar{k}=\frac{k}{\gamma}. \end{equation}
This model has been studied in \cite{ChHa1} and later in \cite{ChHa2, HaKoZh}.
In general, it is easy to construct initial configurations for a simple three particle system where
a global flock is not achieved (see \textbf{Appendix. A} in \cite{ChHa1}). Furthermore, despite the simplicity of this system, the asymptotic behavior is not yet fully understood for all initial configurations. That being said,
for the restricted class of initial conditions s.t. $\A(v_0)>0$ it is possible to employ a Lyapunov functional
approach to prove asymptotic flocking. Indeed, if we consider initial data with $\A(v_0)>0$ then we can prove the following system of dissipating differential inequalities (SDDI) holds
\begin{equation} \label{SDDI}\Big| \frac{d}{dt}D(x)\Big| \leq D(v),
\qquad \frac{d}{dt}D(v)\leq -\bar{k}\psi(D(x)) D(v) \A(v_0) ,\end{equation}
which directly implies that $\A(v(t)) \geq \A(v_0)$ for all $t \geq 0$.
From this point, the standard treatment is to introduce the Lyapunov functional
\begin{equation} \label{C-S_unitLyap} \H_{\pm}(t):=D(v)\pm \bar{k}\A(v_0) \int_{0}^{D(x)}
\psi(s)\, ds , \end{equation}
and show with the help of \eqref{SDDI} that this functional dissipates,
i.e. $\H_\pm (t)\leq \H_\pm (0)$ for all $t \geq 0$. Then the following flocking estimate for \eqref{C-S_unit1} was
proven.
\begin{proposition} \label{Theorem:ChHa1} \cite{ChHa1} Suppose that the communication weight $\psi(\cdot)$ and
initial configurations $(x_0,v_0)$ satisfy the following conditions:
\begin{equation*} \A(v_0)>0, \qquad 0 < D(v_0)< \bar{k}\A(v_0)
\min{\left\{ \int_{D(x_0)}^\infty \psi(s)\, ds , \int_{0}^{D(x_0)} \psi(s)\, ds \right\}} .\end{equation*}
Then, there exists a unique solution $(x(t),v(t))$ to system \eqref{C-S_unit} satisfying the
asymptotic flocking conditions:
\begin{equation*}  \sup \limits_{t\geq 0}D(x) < D^{\infty} \qquad \text{and}
\qquad  \, D(v)\leq D(v_0) e^{-\bar{k}\A(v_0) \psi(D^{\infty}) t}, \quad t\geq 0, \end{equation*}
where $D^\infty$ is defined implicitly by $D(v_0):=\bar{k}\A(v_0) \int_{D(x_0)}^{D^\infty}\psi(s)\, ds$.
\end{proposition}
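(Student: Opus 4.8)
The plan is to run the classical modulated-energy (Lyapunov functional) argument for Cucker--Smale type flocking, built on the two ingredients recorded above: the system of dissipating differential inequalities \eqref{SDDI} and the functionals $\H_{\pm}$ of \eqref{C-S_unitLyap}. First I would dispose of well-posedness. The right-hand side of \eqref{C-S_unit1} is locally Lipschitz in $(x,v)$, since $\Gamma(v_i,v_j)=v_j-(v_i\cdot v_j)v_i$ is polynomial and $x\mapsto\psi(|x_i-x_j|)$ is Lipschitz by \eqref{PsiAssum2}; hence Cauchy--Lipschitz yields a unique maximal solution. Since $\Gamma(v_i,v_j)\cdot v_i=0$, one has $\frac{d}{dt}|v_i|^{2}=0$, so $|v_i(t)|\equiv|v_{i0}|=1$ and $|\dot x_i|\equiv1$; no component escapes to infinity in finite time, so the solution is global.

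Second, I would establish the orientation invariance $\A(v(t))\ge\A(v_0)$ for all $t\ge0$. On any interval on which $\A(v)>0$, the second inequality of \eqref{SDDI} holds with non-positive right-hand side, so $D(v)$ is non-increasing there; since $|v_i|\equiv1$ gives $D(v)^{2}=2(1-\A(v))$, this forces $\A(v(t))=1-\tfrac12 D(v(t))^{2}\ge 1-\tfrac12 D(v_0)^{2}=\A(v_0)>0$ on that interval. Setting $T^{*}=\sup\{t:\A(v(s))>0\ \text{for all }s\in[0,t]\}$, the previous line and continuity give $\A(v(T^{*}))\ge\A(v_0)>0$, which is incompatible with $T^{*}<\infty$; hence $\A(v(t))\ge\A(v_0)$ and \eqref{SDDI} hold for all $t\ge0$. (Moreover $D(v(t))>0$ for every $t$: if $D(v(t_0))=0$ then all $v_i(t_0)$ coincide and, by uniqueness, $v_i(t)$ is independent of $i$ for all $t$, contradicting $D(v_0)>0$.)

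Third, I would exploit the dissipation of $\H_{\pm}$. Differentiating \eqref{C-S_unitLyap} and inserting \eqref{SDDI} --- with $|\frac{d}{dt}D(x)|\le D(v)$ for the position term and $\frac{d}{dt}D(v)\le-\bar k\A(v_0)\psi(D(x))D(v)$ for the velocity term --- one checks $\frac{d}{dt}\H_{\pm}(t)\le0$, hence $\H_{\pm}(t)\le\H_{\pm}(0)$. Rewriting $\H_{+}(t)\le\H_{+}(0)$ gives $\bar k\A(v_0)\int_{D(x_0)}^{D(x(t))}\psi(s)\,ds\le D(v_0)-D(v(t))<D(v_0)$. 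The map $R\mapsto\bar k\A(v_0)\int_{D(x_0)}^{R}\psi(s)\,ds$ is continuous, strictly increasing (as $\psi>0$), vanishes at $R=D(x_0)$, and exceeds $D(v_0)$ as $R\to\infty$ by the first term of the $\min$ in the hypothesis; so $D^{\infty}$ defined by $\bar k\A(v_0)\int_{D(x_0)}^{D^{\infty}}\psi=D(v_0)$ is finite, and monotonicity yields $D(x(t))<D^{\infty}$ for all $t\ge0$, the uniform group-coherence bound of the statement. (Symmetrically, $\H_{-}(t)\le\H_{-}(0)$ together with the second term of the $\min$ produces a uniform positive lower bound $\inf_{t\ge0}D(x)>0$, which explains the appearance of the minimum.)

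Finally, the velocity decay follows from $D(x(t))\le D^{\infty}$: since $\psi$ is non-increasing, $\psi(D(x(t)))\ge\psi(D^{\infty})>0$, so the second inequality of \eqref{SDDI} gives $\frac{d}{dt}D(v)\le-\bar k\A(v_0)\psi(D^{\infty})D(v)$, and Gronwall's lemma produces $D(v(t))\le D(v_0)e^{-\bar k\A(v_0)\psi(D^{\infty})t}$, completing the asymptotic flocking assertion. The step I expect to be the genuine obstacle is the derivation of \eqref{SDDI} itself --- above all the contraction estimate for $D(v)$. One must cope with the non-differentiability of $D(v)=\max_{i,j}|v_i-v_j|$ (it is locally Lipschitz, hence differentiable a.e., and at a differentiability point one may freeze the extremal pair, or else argue with upper Dini derivatives and a Danskin-type selection lemma), and then carry out the geometric estimate at the extremal pair $(M,m)$: using $|v_i|\equiv1$ one writes $\frac{d}{dt}\A(v)=\dot v_M\cdot v_m+v_M\cdot\dot v_m$, expands by \eqref{C-S_unit1}, and observes that every summand $\Gamma(v_M,v_j)\cdot v_m=v_j\cdot v_m-(v_M\cdot v_j)(v_M\cdot v_m)\ge0$ --- which is exactly where $\A(v)>0$ is needed, as it confines all relevant inner products to $[\A(v),1]$ --- whence, with $\psi_{Mj},\psi_{mj}\ge\psi(D(x))$ (from $|x_M-x_j|,|x_m-x_j|\le D(x)$ and $\psi$ decreasing) and $1-\A(v)=\tfrac12 D(v)^{2}$, one obtains $\frac{d}{dt}D(v)\le-\bar k\psi(D(x))\A(v)D(v)\le-\bar k\psi(D(x))\A(v_0)D(v)$.
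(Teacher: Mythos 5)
Your proposal is correct and follows exactly the route the paper itself sketches for this proposition (which it otherwise defers to \cite{ChHa1}): derive the SDDI \eqref{SDDI} at the extremal pair using $\A(v)>0$, bootstrap $\A(v(t))\ge\A(v_0)$ by a continuity argument, show dissipation of the Lyapunov functionals $\H_{\pm}$ of \eqref{C-S_unitLyap} to get the uniform bound $D(x(t))<D^{\infty}$, and conclude the exponential velocity decay by Gronwall. Your additional remarks on well-posedness and on the role of the second term of the minimum (via $\H_{-}$) are consistent with the standard argument and with the paper's presentation.
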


We should also note here that invariant regions of velocities and the flocking property have
also been studied in a ``cone-vision'' variant of the C-S model in \cite{LiHuWu}. In the
``cone-vision'' model the agents update their velocities by only considering other agents that belong in a certain vision-cone of their velocity vector. In general,
such a model can give birth to multi-cluster formations, but conditions for the emergence of a unique flock in the 2-d and 3-d models were presented in \cite{LiHuWu}.

\section{A second order differential inequality for the velocity diameter} \label{Sec:Veloc-diam}

We now prove the main inequality for velocity diameters. In our proof we assume the more general communication
$\psi:=\psi(|x_j-x_i|)$ where $\psi(\cdot)$ is a decreasing weight function s.t. $0<\psi(\cdot)<\psi_M $. We
make no assumption of a lower bound for $\psi(\cdot)\geq \psi_m>0$ in this.

\begin{proposition} \label{Prop:Vel-diam} Let $(x,v,s)$ be a solution to \eqref{IS}
with initial data $(x_0,v_0,s_0)$ that satisfy \eqref{IC}. We also assume
that this solution has all flight directions (orientations of velocities)
chosen s.t. $\A(v(t))>0$ for all $t\geq 0$. Then we have
\begin{align}  \label{BasIneq1}  \frac{d^2}{dt^2} D(v)^2
+ \frac{\gamma}{\chi} \frac{d}{dt}D(v)^2 +  \frac{2 k}{\chi}\psi(D(x)) D(v)^2 \A(v)
\leq \frac{4}{\chi^2}\left( D(s)^2 + \max \limits_{1\leq i \leq N}|s_i|^2 D(v)^2 \right). \end{align}
Moreover, the inequality for $D(v)$ is
\begin{align}  \label{BasIneq2}  \frac{d^2}{dt^2} D(v)^2
+ \frac{\gamma}{\chi} \frac{d}{dt}D(v)^2 +  \frac{2 k}{\chi}\psi(D(x)) D(v)^2 \A(v)
\leq \C_1 \int_0^t ds \, e^{-\frac{\gamma}{\chi}(t-s)}D(v(s))^2  +\C_2 e^{-\frac{\gamma}{\chi}t} , \end{align}
where \begin{equation*} \C_1=\frac{4k^2 \psi_M}{\gamma \chi}(1+D(v)^2)< \frac{12 k^2 \psi_M}{\gamma \chi}\quad \text{and}\quad  \C_2= \frac{4}{\chi^2}\left( D(s_0)^2 +2\max \limits_{1\leq i \leq N}|s_i(0)|^2\right)\end{equation*}

\end{proposition}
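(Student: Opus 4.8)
The plan is to establish the pointwise-in-time identity for $D(v)^2$ at a fixed "maximizing pair" and then bootstrap from the first-order spin equation \eqref{Sp}. First I would fix a time $t$ and let $(i,j)$ be a pair realizing the maximum in $D(v(t)) = |v_i(t)-v_j(t)|$; by Rademacher-type reasoning the function $t \mapsto D(v(t))^2$ is a.e. twice differentiable and its derivatives agree with those of $|v_i-v_j|^2$ at that instant (away from a measure-zero set of crossing times, with the standard one-sided/lim-sup conventions for the second derivative). Writing $w := v_i - v_j$, we have $D(v)^2 = |w|^2$, so $\frac{d}{dt}D(v)^2 = 2 w\cdot \dot w$ and $\frac{d^2}{dt^2}D(v)^2 = 2|\dot w|^2 + 2 w \cdot \ddot w$. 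I would then substitute the second-order velocity equation \eqref{IS1} for $\ddot v_i$ and $\ddot v_j$ into $w\cdot\ddot w$. Using $|v_k|=1$ from \eqref{IC} and Proposition \ref{Prop:Properties}(i), the $\chi|\dot v_i|^2 v_i$ terms combine to yield a nonnegative contribution (since $\chi|\dot v_i|^2 v_i\cdot w$ summed appropriately is controlled), and the friction term $\gamma\dot v$ produces exactly $-\frac{\gamma}{\chi}\frac{d}{dt}D(v)^2$ after dividing by $\chi$.

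The key algebraic step is to show the coupling term on the RHS of \eqref{IS1}, namely $\frac{k}{N}\sum_k \psi_{ik}\Gamma(v_i,v_k) - \frac{k}{N}\sum_k \psi_{jk}\Gamma(v_j,v_k)$, dotted with $w = v_i-v_j$, is bounded above by $-\frac{2k}{\chi}\psi(D(x))D(v)^2\A(v)$ plus spin error terms — wait, more precisely, I want the coupling contribution to $\frac{2w\cdot\ddot w}{\chi}$ (after moving it to the left) to dominate $\frac{2k}{\chi}\psi(D(x))D(v)^2\A(v)$ from below. This is the heart of the matter and mirrors the Cucker–Smale-with-unit-speed computation of \cite{ChHa1}: one expands $\Gamma(v_i,v_k)-\Gamma(v_j,v_k) = (v_i-v_j) \cdot(\text{stuff}) $ and uses that $v_i\cdot v_k, v_j\cdot v_k \geq \A(v)$ together with symmetry $\psi_{ik}=\psi_{ki}$ and monotonicity $\psi(D(x))\leq\psi_{ik}$ to extract the factor $\A(v)$; the maximality of the pair $(i,j)$ (so that $w\cdot(v_k - v_i)\leq 0$ and $w\cdot(v_j - v_k)\leq 0$ type inequalities, or rather the analogous projections) is what makes the cross terms have the right sign. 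I expect this to be the main obstacle: getting the constant exactly $2k/\chi$ and correctly tracking the residual term $\max_i |s_i|^2 D(v)^2$ that arises from the $\chi|\dot v_i|^2 v_i$ terms when they do not perfectly cancel — recall $\chi^2|\dot v_i|^2 = |s_i|^2$ from the identity $s_i\times(s_i\times v_i) = -|s_i|^2 v_i$ used to derive \eqref{IS1}, so those terms contribute on the order of $\max_i|s_i|^2 |w|^2 = \max_i|s_i|^2 D(v)^2$, and one must also handle the genuinely inhomogeneous piece. Tracing where the $D(s)^2$ term on the RHS comes from: it should emerge from comparing $|\dot v_i|^2$ and $|\dot v_j|^2$, i.e. from $\big||\dot v_i| - |\dot v_j|\big|$ type bounds which, via $\chi\dot v_i = s_i\times v_i$ and the BAC-CAB expansion, are controlled by $|s_i - s_j| \leq D(s)$ plus lower-order terms — this bookkeeping is delicate but routine once set up. This establishes \eqref{BasIneq1}.

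For \eqref{BasIneq2} the plan is to control the two spin-dependent quantities $D(s)^2$ and $\max_i|s_i|^2$ in terms of an integral of $D(v)^2$ plus an exponentially decaying initial term. I would start from \eqref{Sp}, $\dot s_i = \frac{k}{N}\sum_j \psi_{ij}(v_i\times v_j) - \frac{\gamma}{\chi}s_i$, and solve by Duhamel: $s_i(t) = e^{-\frac{\gamma}{\chi}t}s_i(0) + \frac{k}{N}\int_0^t e^{-\frac{\gamma}{\chi}(t-s)}\sum_j\psi_{ij}(v_i\times v_j)(s)\,ds$. Subtracting the expressions for $s_i$ and $s_j$ and estimating $|v_i\times v_j - v_k\times v_l|$-type differences by $D(v)$ (using $|v_i|=1$), together with the key geometric fact $|v_i\times v_j| = |v_i||v_j||\sin\theta_{ij}| \leq |v_i - v_j| = $ bounded by $D(v)$ when $\A(v)>0$ (so that the angle is acute and $|\sin\theta_{ij}|\leq$ the chord length), gives $D(s(t)) \lesssim e^{-\frac{\gamma}{\chi}t}D(s_0) + \frac{k\psi_M}{N}\cdot(\text{const})\int_0^t e^{-\frac{\gamma}{\chi}(t-s)}D(v(s))^2\,ds$, and similarly $\max_i|s_i(t)|^2 \lesssim e^{-\frac{\gamma}{\chi}t}\max_i|s_{i0}|^2 + (\ldots)$. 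Hmm — I should be careful: $|v_i \times v_j|^2 = |v_i|^2|v_j|^2 - (v_i\cdot v_j)^2 = 1 - (v_i\cdot v_j)^2 \leq (1-v_i\cdot v_j)(1+v_i\cdot v_j) \leq 2(1 - v_i\cdot v_j) = D(v)^2$ using the relation $\A(v) = 1 - \tfrac12 D(v)^2$; this is exactly what produces the $D(v)^2$ (not $D(v)$) inside the integral and hence $\C_1 \propto k^2\psi_M/(\gamma\chi)$ with the factor $(1+D(v)^2)$. Squaring the Duhamel bound for $D(s)$, using $(a+b)^2 \leq 2a^2 + 2b^2$ and Cauchy–Schwarz (or simply $\int_0^t e^{-\frac{\gamma}{\chi}(t-s)}g(s)ds \leq \frac{\chi}{\gamma}\sup$, though here we keep the convolution form) then yields \eqref{BasIneq2} with $\C_2 = \frac{4}{\chi^2}(D(s_0)^2 + 2\max_i|s_i(0)|^2)$ after plugging into the RHS of \eqref{BasIneq1}. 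The bound $D(v)^2 \leq 2$ (from $\A(v)>0 \Rightarrow D(v)^2 < 2$) turns $\C_1 = \frac{4k^2\psi_M}{\gamma\chi}(1+D(v)^2)$ into the stated $< \frac{12k^2\psi_M}{\gamma\chi}$. The main care-point in this second part is the precise constant in $|v_i\times v_j|\leq D(v)$ and ensuring the $\frac{1}{N}\sum_j\psi_{ij}$ averaging does not lose a factor of $N$; since $\psi_{ij}\leq\psi_M$ and there are $N$ terms each of size $\leq D(v)$, the sum over $j$ of $\frac1N$ contributes an $O(1)$ constant, not $N$, which is what keeps $\C_1$ clean.
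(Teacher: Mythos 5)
Your proposal follows essentially the same route as the paper: differentiate $|v_i-v_j|^2$ twice for the extremal pair, use the maximality of that pair together with $\psi_{ik}\geq\psi(D(x))$ to extract the term $\frac{2k}{\chi}\psi(D(x))D(v)^2\A(v)$, bound the acceleration-difference contribution via $\chi(\dot v_i-\dot v_j)=(s_i-s_j)\times v_i+s_j\times(v_i-v_j)$, and then obtain \eqref{BasIneq2} by a Duhamel estimate on the spin equation \eqref{Sp} using $|v_i\times v_j|^2\le|v_i-v_j|^2$. The only (harmless) discrepancy is your attribution of the residual terms: in the actual computation the $\chi|\dot v_i|^2v_i$ contributions have a favorable sign when dotted with $v_i-v_j$ and can be dropped outright, and both $D(s)^2$ and $\max_i|s_i|^2D(v)^2$ arise from the single term $|\dot v_i-\dot v_j|^2$.
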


\begin{proof}
We write down eq. \eqref{IS1} for two particles $i,j$ and take the inner product
with $v_j$ and $v_i$ respectively. The idea is that by obtaining an equation for the inner product
$v_i \cdot v_j$ we can then derive an equation for $|v_i-v_j|^2$.
\begin{equation*} \chi \ddot{v}_i \cdot v_j +\gamma \dot{v}_i \cdot v_j
+\chi |\dot{v}_i|^2 v_i \cdot v_j = \frac{k}{N}\sum
\limits_{k=1}^{N}\psi_{ik}(v_k \cdot v_j -(v_i \cdot v_k)v_i \cdot v_j) ,\end{equation*}
\begin{equation*} \chi \ddot{v}_j \cdot v_i +\gamma \dot{v}_j \cdot v_i
+\chi |\dot{v}_j|^2 v_i \cdot v_j = \frac{k}{N}\sum
\limits_{k=1}^{N}\psi_{jk}(v_k \cdot v_i -(v_j \cdot v_k)v_j \cdot v_i) .\end{equation*}
Now if we sum both equations we get
\begin{align} \nonumber & \chi (\ddot{v}_i \cdot v_{j} +\ddot{v}_i \cdot v_{j})
+\gamma \frac{d}{dt} (v_i \cdot v_j) + \chi (|\dot{v}_i|^2 +|\dot{v}_j|^2) v_i \cdot v_j= \\
 \label{Inn} & \frac{k}{N}\sum
\limits_{k=1}^{N}\psi_{ik}(v_k \cdot v_j -(v_k \cdot v_i) v_i \cdot v_j)+
\frac{k}{N}\sum
\limits_{k=1}^{N}\psi_{jk}(v_k \cdot v_i -(v_k \cdot v_j) v_i \cdot v_j ).\end{align}
Note that we have $\frac{d^2}{dt^2} (v_i \cdot v_j)=\ddot{v}_i \cdot v_j
+2\dot{v}_i \cdot \dot{v}_j +\ddot{v}_j \cdot v_i$, so eq. \eqref{Inn} yields
\begin{align} \nonumber & \chi \frac{d^2}{dt^2} (v_i \cdot v_j)
+\gamma \frac{d}{dt} (v_i \cdot v_j) - 2 \chi \dot{v}_i \cdot \dot{v}_j+ \chi (|\dot{v}_i|^2
+|\dot{v}_j|^2)v_i \cdot v_j= \\ \label{MaxAng1} & \frac{k}{N}\sum
\limits_{k=1}^{N}\psi_{ik}(v_k \cdot v_j -(v_k \cdot v_i)  v_i \cdot v_j)+
\frac{k}{N}\sum
\limits_{k=1}^{N}\psi_{jk}(v_k \cdot v_i -(v_k \cdot v_j) v_i \cdot v_j )\end{align}
We now use the identity $v_i \cdot v_j=1-\frac{1}{2}|v_i-v_j|^2$ in \eqref{MaxAng1} to get
\begin{align} \nonumber & \frac{\chi}{2} \frac{d^2}{dt^2} |v_i-v_j|^2
+\frac{\gamma}{2} \frac{d}{dt}|v_i-v_j|^2 = - 2 \chi \dot{v}_i \cdot \dot{v}_j + \chi (|\dot{v}_i|^2
+|\dot{v}_j|^2)v_i \cdot v_j \\ \label{MaxAng2}&
-\frac{k}{N}\sum
\limits_{k=1}^{N}\psi_{ik}(v_k \cdot v_j -(v_k \cdot v_i)  v_i \cdot v_j)-
\frac{k}{N}\sum
\limits_{k=1}^{N}\psi_{jk}(v_k \cdot v_i -(v_k \cdot v_j) v_i \cdot v_j )
 .\end{align}
Note also that since $- 2 \chi \dot{v}_i \cdot \dot{v}_j+ \chi (|\dot{v}_i|^2
+|\dot{v}_j|^2)v_i \cdot v_j \leq \chi |\dot{v}_i
-\dot{v}_j|^2$ inequality \eqref{MaxAng2} yields
\begin{align}  \label{MaxAng3} \frac{\chi}{2} \frac{d^2}{dt^2} |v_i -v_j|^2
+\frac{\gamma}{2} \frac{d}{dt}|v_i -v_j|^2 \leq \I_1 +\I_2, \end{align}
where $\I_1:=\chi |\dot{v}_i
- \dot{v}_j|^2$ and
\begin{align*} \I_2:=\I_{21}+\I_{22}:=-\frac{k}{N}\sum
\limits_{k=1}^{N}\psi_{ik}(v_k \cdot v_j -(v_k \cdot v_i)  v_i \cdot v_j)-
\frac{k}{N}\sum
\limits_{k=1}^{N}\psi_{jk}(v_k \cdot v_i -(v_k \cdot v_j) v_i \cdot v_j ) .\end{align*}
To control the term $\I_1$ we first write $\dot{v}_i-\dot{v}_j$ as
\begin{equation*} \dot{v}_i-\dot{v}_j= \frac{1}{\chi}(s_i -s_j)\times v_i +\frac{1}{\chi}s_j \times (v_i -v_j) ,\end{equation*}
by virtue of \eqref{IS}. We therefore have that
\begin{equation*} \I_1 \leq \frac{2}{\chi}|s_i -s_j|^2
+\frac{2}{\chi}|s_j|^2 |v_i-v_j|^2 ,\end{equation*}
which yields \begin{equation} \label{I1} \I_1 \leq \frac{2}{\chi}D(s)^2 +\frac{2}{\chi}\max \limits_{1 \leq i \leq N}|s_i|^2 D(v)^2 .\end{equation}

We proceed with the treatment of $\I_2$. In this treatment we make the
assumption that the $i,j$ indices are chosen so that $v_i$ and $v_j$ are the
extremal velocities that minimize $v_i \cdot v_j$, i.e $v_j \cdot v_k \geq v_i \cdot v_j$ for
all $1\leq k \leq N$. Indeed, with this choice for the $i,j$ pair we have that
\begin{align*} \I_{21} &=-\frac{k}{N}\sum \limits_{k=1}^N \psi_{ik} (v_k \cdot v_j
-(v_k \cdot v_i) v_i \cdot v_j) \\ & \leq -\frac{k}{N} \min \limits_{i,j}\psi_{ij}\sum \limits_{k=1}^N (v_k \cdot v_j
-(v_k \cdot v_i) v_i \cdot v_j) \\ &\leq -\frac{k}{N} \psi(D(x))\sum \limits_{k=1}^N (v_k \cdot v_j -(v_k \cdot v_i) v_i \cdot v_j).
\end{align*}
Note that we used the fact that $v_k \cdot v_j -(v_k \cdot v_i) v_i \cdot v_j\geq 0$
(since $v_k \cdot v_j \geq v_i \cdot v_j$ and $|v_k \cdot v_i|\leq 1$). In similar manner we show for $\I_{22}$ that
\begin{align*} \I_{22} &=-\frac{k}{N}\sum \limits_{k=1}^N \psi_{jk} (v_k \cdot v_i
-(v_k \cdot v_j) v_i \cdot v_j) \\ & \leq -\frac{k}{N} \min \limits_{i,j}\psi_{ij}\sum \limits_{k=1}^N (v_k \cdot v_i
-(v_k \cdot v_j) v_i \cdot v_j) \\ &\leq -\frac{k}{N} \psi(D(x))\sum \limits_{k=1}^N (v_k \cdot v_i -(v_k \cdot v_j) v_i \cdot v_j).
\end{align*}
If we add the two expressions after we re-arrange terms we have
\begin{align*} \I_2 &\leq -\frac{k}{N} \psi(D(x))\sum \limits_{k=1}^N (v_k \cdot v_j -(v_k \cdot v_j)
v_i \cdot v_j) -\frac{k}{N} \psi(D(x))\sum \limits_{k=1}^N (v_k \cdot v_i -(v_k \cdot v_i) v_i \cdot v_j) \\
& \leq -\frac{k}{N} \psi(D(x))\sum \limits_{k=1}^N (1 - v_i \cdot v_j) v_k \cdot v_j-\frac{k}{N}
\psi(D(x))\sum \limits_{k=1}^N (1 - v_i \cdot v_j) v_k \cdot v_i
\\ &\leq -\frac{k}{2N} \psi(D(x)) D(v)^2 \sum \limits_{k=1}^N v_k \cdot v_j
-\frac{k}{2N} \psi(D(x)) D(v)^2 \sum \limits_{k=1}^N v_k \cdot v_i \leq -k
\psi(D(x))D(v)^2 v_i \cdot v_j .
\end{align*}
This yields the following bound for $\I_2$
\begin{equation}\label{I2} \I_2 \leq - k\psi(D(x)) D(v)^2 \A(v) . \end{equation}
Inserting the estimates \eqref{I1}-\eqref{I2} in \eqref{MaxAng3}, and keeping in mind that we have chosen $i, j$ s.t. $D(v)=|v_i -v_j|$, we get the first inequality \eqref{BasIneq1}.

Finally, we turn to the term $\I_1$ which controls the RHS of \eqref{BasIneq1}.
To control $ |s_i-s_j|^2$ we use the spin equation \eqref{Sp}
for $s_i-s_j$, and after we multiply by $s_i-s_j$ we get
\begin{equation*} \frac{d}{dt}|s_i-s_j|^2 +\frac{2\gamma}{\chi} |s_i-s_j|^2
=\frac{2k}{N}\sum \limits_k \left((\psi_{ik} v_i -\psi_{jk}v_j)
\times v_k \right) \cdot (s_i-s_j) .\end{equation*}
We note that when $0\leq \psi(\cdot)\leq \psi_M$ then $|\psi_{ik} v_i -\psi_{jk}v_j| \leq \psi_M |v_i -v_j|$. Now using
the Young inequality we get
\begin{align*} \frac{d}{dt}|s_i-s_j|^2 +\frac{2\gamma}{\chi} |s_i-s_j|^2
 \leq \frac{k^2 \chi \psi_M}{\gamma} |v_i -v_j|^2 +\frac{\gamma}{\chi}|s_i-s_j|^2 .\end{align*}
With the help of Duhamel's principle we get
\begin{align*} |s_i-s_j|^2 \leq \frac{k^2 \chi \psi_M}{\gamma} \int_0^t ds \, e^{-\frac{\gamma}{\chi}
(t-s)}|v_i(s)-v_j(s)|^2+ e^{-\frac{\gamma}{\chi} t} |s_{i0}-s_{j0}|^2 .\end{align*}
Thus, picking $i,j$ s.t. $D(s)=|s_i-s_j|$ we have
\begin{equation*} D(s)^2 \leq \frac{k^2 \chi \psi_M}{\gamma}
\int_0^t ds \, e^{-\frac{\gamma}{\chi}(t-s)} D(v(s))^2 +
 e^{-\frac{\gamma}{\chi}t}D(s_0)^2. \end{equation*}
A similar bound can be obtained for $|s_i|^2$, i.e.
\begin{equation*} \frac{d}{dt}|s_i|^2 +\frac{\gamma}{\chi} |s_i|^2 \leq \frac{k^2 \chi \psi_M}{\gamma N}
\sum \limits_{k=1}^N |v_i \times v_k|^2.\end{equation*}

Now using the fact that $|v_i \times v_j|^2=|v_i-v_j|^2-\frac{1}{4}|v_i-v_j|^4$,
which implies $|v_i \times v_j|^2 \leq |v_i-v_j|^2$, we get the estimate
\begin{equation*} |s_i|^2 \leq e^{-\frac{\gamma}{\chi}t} |s_{i0}|^2+\frac{k^2 \chi \psi_M}{\gamma}
\int_0^t ds \, e^{-\frac{\gamma}{\chi}(t-s)}D(v(s))^2 .\end{equation*}
In this case, collecting the terms for $\I_1$ we get
\begin{align*}  \I_1 & \leq \frac{2 k^2 \psi_M}{\gamma} \int_0^t ds \, e^{-\frac{\gamma}{\chi}
(t-s)}D(v(s))^2+ \frac{2}{\chi} e^{-\frac{\gamma}{\chi} t} D(s_0)^2
 \\  &+ \frac{2 k^2 \psi_M}{\gamma} D(v)^2 \int_0^t ds \, e^{-\frac{\gamma}{\chi} (t-s)}
D(v(s))^2 +\frac{2}{\chi} e^{-\frac{\gamma}{\chi} t}  D(v)^2
 \max \limits_{1\leq i \leq N}|s_{i0}|^2.\end{align*}
\end{proof}

\begin{remark}
 In the derivation of inequalities \eqref{BasIneq1} -- \eqref{BasIneq2} we have used
the fact that the flight orientations are all contained in an invariant region $\A(v(t))>0  \, \,
\text{for} \, \, t \geq 0$. The purpose of this work is to
 show that if $\A(v_0)>\delta_0$ (for any choice $\delta_0 \in (0,1)$),
then under certain assumptions on the initial data
and parameters of the model, and with the help of a continuity
argument on \eqref{BasIneq1} -- \eqref{BasIneq2}, we have that $\A(v(t))>\delta_0$ for all $t\geq 0$.
\end{remark}

\begin{remark}
In addition to identifying invariant regions with the help of Proposition \ref{Prop:Vel-diam} we want to know
whether convergence to a common velocity can be derived from \eqref{BasIneq1} -- \eqref{BasIneq2}. Some of the obvious
difficulties in treating \eqref{BasIneq1} -- \eqref{BasIneq2} are: (i) the nonlinearity
in the term $\psi(D(x))D(v)^2 \A(v)$, (ii) the non-homogeneity in the RHS, and (iii) the fact that they are both
second order differential inequalities. All of the above imply the
necessity to work with new types of Gronwall inequalities. In the Section that follows, we derive such inequalities.
\end{remark}

\section{Second order Gronwall inequalities.} \label{Sec:Gron}

We begin with a Lemma that is a generalization of a result presented in \cite{ChHaYu1}. In \cite{ChHaYu1}, a second order Gronwall inequality for the differential inequality $a\ddot{y}+b\dot{y}+cy +d \leq 0$ with
$a>0$, $b, c, d \in \mathbb{R}$, was proven. Here we show a similar result, but with a continuous function $g(t)$ in the place of the constant $d$.

\begin{lemma} \label{Lemma:Gron1}
Let $y=y(t)$ be a nonnegative $C^2$-function satisfying the second order differential inequality
\begin{equation} \label{Sec-l} a\ddot{y}+b\dot{y}+cy \leq g(t) , \quad y(0)=y_0, \quad \dot{y}(0)=y_1, \quad t>0 ,\end{equation}
where $a>0$, $b,c$ constants, and $g(t)$ is a positive continuous function. Then, we have the estimates for
\\ \\ (i) $b^2-4ac>0 :$

\begin{align} \label{Pos-Dis} y(t) \leq  e^{-\nu_1 t} y_0 + a \frac{e^{-\nu_2 t}-e^{-\nu_1 t}}{\sqrt{b^2-4ac}}\left( y_1+\nu_1 y_0 \right)+ \frac{1}{a}\int_0^t ds \, e^{-\nu_1 (t-s)}
\int_0^s ds' e^{-\nu_2 (s-s')}g(s'), \end{align}
where $\nu_{1}=\frac{b+\sqrt{b^2-4ac}}{2a}$ and $\nu_{2}=\frac{b-\sqrt{b^2-4ac}}{2a}$.
\\ \\(ii) $b^2-4ac \leq 0 :$

\begin{align} \label{Neg-Dis} y(t) \leq e^{-\frac{b}{2a}t} \Bigg[ y_0 +\left( \frac{b}{2a} y_0+y_1 \right)t \Bigg] + \frac{1}{a}\int_0^t ds \int_0^s ds' e^{-\frac{b}{2a}(t-s')}
g(s') .\end{align}
Moreover, if $g(t)$ vanishes asymptotically as $t \to \infty$, i.e. $g(t)\to 0$ as $t \to \infty$. Then, it follows that $y(t)\to 0$ as $t \to \infty$.
\end{lemma}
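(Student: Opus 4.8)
The plan is to collapse the second order inequality \eqref{Sec-l} into a chain of first order linear inequalities, each handled by the elementary integrating-factor (Gr\"onwall) argument, and then to read off the stated bounds; throughout $a>0$ is fixed. \emph{Case (i), $b^2-4ac>0$.} With $\nu_1,\nu_2$ as in the statement one checks $\nu_1+\nu_2=b/a$ and $\nu_1\nu_2=c/a$, so the operator factors over $\mathbb R$ as $a\ddot y+b\dot y+cy=a\bigl(\tfrac{d}{dt}+\nu_2\bigr)\bigl(\tfrac{d}{dt}+\nu_1\bigr)y$. Put $u:=\dot y+\nu_1 y$; then \eqref{Sec-l} becomes $\dot u+\nu_2 u\le g/a$ with $u(0)=y_1+\nu_1 y_0$, and multiplying by $e^{\nu_2 t}$ and integrating gives $u(t)\le h(t):=e^{-\nu_2 t}(y_1+\nu_1 y_0)+\tfrac1a\int_0^t e^{-\nu_2(t-s)}g(s)\,ds$. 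Inserting this into $\dot y+\nu_1 y\le h(t)$ and multiplying by $e^{\nu_1 t}$ yields $y(t)\le e^{-\nu_1 t}y_0+\int_0^t e^{-\nu_1(t-s)}h(s)\,ds$. Computing the one-exponential term via $\int_0^t e^{-\nu_1(t-s)}e^{-\nu_2 s}\,ds=(e^{-\nu_2 t}-e^{-\nu_1 t})/(\nu_1-\nu_2)$ and using $\nu_1-\nu_2=\sqrt{b^2-4ac}/a$ turns the bound into exactly \eqref{Pos-Dis}.

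\emph{Case (ii), $b^2-4ac\le 0$.} Here there is no real factorization, so instead I remove the drift by conjugation: set $w:=e^{\frac{b}{2a}t}y$. A direct computation gives $a\ddot w=e^{\frac{b}{2a}t}\bigl(a\ddot y+b\dot y+\tfrac{b^2}{4a}y\bigr)$, and since $a\ddot y+b\dot y+cy\le g$ with $\tfrac{b^2}{4a}-c\le 0$ and $y\ge 0$, we get $a\ddot w\le e^{\frac{b}{2a}t}g(t)$. Integrating twice from $0$, with $w(0)=y_0$ and $\dot w(0)=y_1+\tfrac{b}{2a}y_0$, and multiplying back by $e^{-\frac{b}{2a}t}$, produces exactly \eqref{Neg-Dis}. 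This is the only place where nonnegativity of $y$ enters, namely to discard the term $(\tfrac{b^2}{4a}-c)y$; case (i) does not use $y\ge 0$ at all.

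\emph{Asymptotics.} The final claim additionally uses $b,c>0$ — automatically true in all applications in this paper — so that $\nu_1,\nu_2>0$ in case (i) and $\tfrac{b}{2a}>0$ in case (ii). Then, in both \eqref{Pos-Dis} and \eqref{Neg-Dis}, the terms not involving $g$ are a polynomial times a decaying exponential and hence vanish as $t\to\infty$. For the iterated integral, Fubini rewrites it as a single convolution $\tfrac1a\int_0^t K(t-s)g(s)\,ds$ against a nonnegative kernel integrable on $[0,\infty)$: $K(\tau)=(e^{-\nu_2\tau}-e^{-\nu_1\tau})/(\nu_1-\nu_2)$ in case (i) and $K(\tau)=\tau e^{-\frac{b}{2a}\tau}$ in case (ii). Given $\eps>0$, choosing $T$ with $g<\eps$ on $[T,\infty)$ and splitting the integral at $T$ bounds the tail by $\eps\|K\|_{L^1}$ and the head by $\bigl(\sup_{[0,T]}g\bigr)\int_{t-T}^t K(\tau)\,d\tau\to 0$; hence the convolution, and therefore $y(t)$, tends to $0$.

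All the computations are routine, so the one genuinely delicate point is case (ii): the absence of a real factorization forces the $e^{\frac{b}{2a}t}$-conjugation and with it the essential use of $y\ge 0$ to kill the leftover zeroth-order term, and for the asymptotic statement one must reorganize the iterated integrals (rather than estimate them crudely) so that the standard ``$L^1$-kernel convolution preserves decay'' argument applies.
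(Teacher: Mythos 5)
Your proof is correct and follows essentially the same route as the paper: your operator factorization in case (i) is the paper's Lyapunov functional $L(t)=\dot y+Ay-\tfrac1a\int_0^te^{-\frac{h}{a}(t-s)}g(s)\,ds$ in disguise (with $A=\nu_1$ and $h/a=\nu_2$), and case (ii) is the identical conjugation $Y=e^{\frac{b}{2a}t}y$ with nonnegativity of $y$ used to discard the nonpositive zeroth-order term. The only cosmetic difference is in the asymptotics, where you collapse the iterated integral via Fubini into a convolution against an $L^1$ kernel before splitting at $T_{\epsilon}$, whereas the paper splits the double integral directly into three pieces in Lemma \ref{Lemma:Integr-vanish}; both are the same $\epsilon$-argument, and your observation that the decay claim implicitly requires $b,c>0$ is a fair reading of the paper's standing assumptions.
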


\begin{proof} The proof follows the ideas in the proof of Lemma 3.1 pg. 34 in \cite{ChHaYu1} with modifications.
\\ \\(i) For $b^2-4ac>0$ we consider the functional
\begin{align*} L(t):=\dot{y}+Ay-\frac{1}{a}\int_0^t \, ds \, e^{-\frac{h}{a}(t-s)}g(s), \end{align*}
where $A=\frac{b-h}{a}$ and $h$ any of the two positive solutions of polynomial $h^2-bh+ac=0$. For this Lyapunov functional
we find easily that $\dot{L}(t)\leq -\frac{h}{a}L(t)$. Then, we have
\begin{equation*} L(t)\leq e^{-\frac{h}{a}t}L(0) = e^{-\frac{h}{a}t} (y_1 +A y_0) ,\end{equation*}
which in turn yields
\begin{equation*} \dot{y}+Ay \leq \frac{1}{a}\int_0^t ds \, e^{-\frac{h}{a}(t-s)}g(s) + e^{-\frac{h}{a}t} (y_1 +A y_0) .\end{equation*}
Now solving in $y(t)$ by integrating factor and taking $h=\frac{b-\sqrt{b^2-4ac}}{2}$ as the solution that optimizes the exponents we derive the bound in \eqref{Pos-Dis}.
\\ \\(ii) For $b^2-4ac \leq 0$ we make the substitution $Y(t):=e^{\frac{b}{2a}t}y(t)$ and derive the following equation for $Y(t)$, i.e.
\begin{equation*} \ddot{Y}(t)- \frac{b^2-4ac}{4a}Y(t) \leq \frac{1}{a}e^{\frac{b}{2a}t}g(t). \end{equation*}
Now since $y(t)$ is nonegative we have $\ddot{Y}(t) \leq \frac{1}{a}e^{\frac{b}{2a}t}g(t)$ which yields after two integrations
\begin{equation*} Y(t) \leq y_0 + \left( \frac{b}{2a}y_0 +y_1 \right)t +\frac{1}{a} \int_0^t ds \, \int_0^s ds' e^{\frac{b}{2a}s'} g(s') .\end{equation*}
The desired inequality \eqref{Neg-Dis} for $y(t)$ follows trivially after we multiply both sides by $e^{-\frac{b}{2a}t}$.
We leave the last part of our proof, i.e. that when the continuous $g(t) \to 0$, as $t \to \infty$, then it follows
that $y(t)\to 0$ for both \eqref{Pos-Dis}-\eqref{Neg-Dis}, in the following autonomous lemma.
\end{proof}

\begin{lemma} \label{Lemma:Integr-vanish} Assume that we have a continuous function $g(t)$ on $[0,\infty)$ s.t. $g(t)\to 0$ as $t \to \infty$. Then the following double integrals (for $a, b, \nu_1, \nu_2 >0 $ and $\nu_1>\nu_2$)
\begin{equation*} \T_1(t):=e^{-\nu_1 t} \int_0^t ds \, e^{(\nu_1-\nu_2)s} \int_0^s ds' \, e^{\nu_2s'}g(s'), \qquad  \T_2(t):=e^{-\frac{b}{2a} t} \int_0^t ds \int_0^s ds' \,
e^{\frac{b}{2a}s'} g(s') ,\end{equation*}
also vanish as $t \to \infty$.
\end{lemma}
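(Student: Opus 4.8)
The plan is to reduce both double integrals to iterated applications of one elementary linear operator. For $\nu>0$ and $h$ continuous on $[0,\infty)$, define
\begin{equation*}
F_\nu[h](t) := e^{-\nu t}\int_0^t e^{\nu s} h(s)\,\d s .
\end{equation*}
Carrying out the innermost integration in $\T_1$ gives $\int_0^s e^{\nu_2 s'}g(s')\,\d s' = e^{\nu_2 s}F_{\nu_2}[g](s)$, so that
\begin{equation*}
\T_1(t) = e^{-\nu_1 t}\int_0^t e^{\nu_1 s}\,F_{\nu_2}[g](s)\,\d s = F_{\nu_1}\!\big[F_{\nu_2}[g]\big](t),
\end{equation*}
and similarly, with $\mu:=\frac{b}{2a}>0$, $\T_2(t) = F_{\mu}\!\big[F_{\mu}[g]\big](t)$. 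Hence everything reduces to understanding how $F_\nu$ acts on functions that vanish at infinity.

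The key step, and essentially the only nonroutine one, is the claim: \emph{if $h$ is continuous on $[0,\infty)$ with $h(t)\to 0$ as $t\to\infty$, then $F_\nu[h]$ is continuous and $F_\nu[h](t)\to 0$ as $t\to\infty$.} Continuity is immediate, since $t\mapsto\int_0^t e^{\nu s}h(s)\,\d s$ is $C^1$ by the fundamental theorem of calculus. For the decay, I would fix $\eps>0$, pick $T$ with $|h(s)|<\eps$ for all $s\ge T$, and split the integral at $T$: for $t>T$,
\begin{equation*}
|F_\nu[h](t)| \;\le\; e^{-\nu t}\int_0^T e^{\nu s}|h(s)|\,\d s \;+\; \eps\, e^{-\nu t}\int_T^t e^{\nu s}\,\d s \;\le\; M_T\, e^{-\nu t} + \frac{\eps}{\nu},
\end{equation*}
where $M_T:=\int_0^T e^{\nu s}|h(s)|\,\d s<\infty$ depends only on $T$ (only continuity of $h$ on the compact $[0,T]$ is used, so no global bound on $h$ is needed). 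Letting $t\to\infty$ yields $\limsup_{t\to\infty}|F_\nu[h](t)|\le \eps/\nu$, and since $\eps$ was arbitrary, $F_\nu[h](t)\to 0$.

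To finish, apply the claim twice to the given continuous $g$ with $g(t)\to 0$: first $F_{\nu_2}[g]$ (resp.\ $F_\mu[g]$) is continuous and vanishes at infinity, then a second application shows that $\T_1 = F_{\nu_1}\big[F_{\nu_2}[g]\big]$ (resp.\ $\T_2 = F_\mu\big[F_\mu[g]\big]$) does too. I expect no real obstacle beyond the $\eps$--$T$ splitting above; the composition reduction also makes the polynomial prefactor that a naive Fubini argument would otherwise produce for $\T_2$ — namely $\T_2(t)=e^{-\mu t}\int_0^t(t-s)e^{\mu s}g(s)\,\d s$ — disappear, though even with it one could simply bound $t-s\le t$ and use $t e^{-\mu t}\to 0$.
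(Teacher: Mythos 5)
Your proof is correct, but it takes a genuinely different route from the paper's. The paper estimates $\T_1$ directly as a two--dimensional integral: it splits \emph{both} integration variables at the threshold $T_{\epsilon}$ (beyond which $g<\epsilon$), obtaining three pieces $\T_{11},\T_{12},\T_{13}$, and bounds each by explicit exponential computations, using $\|g\|_{\infty}$ on $[0,T_{\epsilon}]$ and $\epsilon$ beyond; the case of $\T_2$ is then declared similar and omitted. You instead factor each double integral as a composition, $\T_1=F_{\nu_1}\bigl[F_{\nu_2}[g]\bigr]$ and $\T_2=F_{\mu}\bigl[F_{\mu}[g]\bigr]$ with $\mu=\frac{b}{2a}$, of the one--dimensional operator $F_{\nu}[h](t)=e^{-\nu t}\int_0^t e^{\nu s}h(s)\,ds$, and prove once that $F_{\nu}$ maps continuous functions vanishing at infinity to continuous functions vanishing at infinity --- using the same $\epsilon$--$T$ splitting as the paper, but in one variable only. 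Both composition identities check out, and the key claim is proved correctly. Your reduction is cleaner and more modular: it treats $\T_1$ and $\T_2$ uniformly (disposing of the polynomial prefactor $t-s'$ that a direct Fubini treatment of $\T_2$ produces), it nowhere uses the hypothesis $\nu_1>\nu_2$, and it avoids the explicit exponential arithmetic. What the paper's direct computation buys in exchange is quantitative information --- an explicit threshold $T_{\epsilon}'$ beyond which $\T_1<\bigl(2+\tfrac{2}{\nu_1\nu_2}\bigr)\epsilon$ --- which your soft $\limsup$ argument does not track, though nothing in the application (the final vanishing step in Theorem \ref{Theorem1}) requires it.
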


\begin{proof} We fix some $\epsilon>0$, and due to the continuity of $g(t)$ we know there
exists some $T_{\epsilon}$ s.t. $g(t)<\epsilon$ for $t \geq T_{\epsilon}$. We can break the double integral
$\T_1(t)$ into 3 terms as follows \begin{align*} \T_1(t)&= e^{-\nu_1 t}\int_0^t ds \ldots  \int_0^s ds' \ldots = \T_{11}(t)+\T_{12}(t)+\T_{13}(t) \\ &=e^{-\nu_1 t}\int_0^{T_{\epsilon}} ds \ldots  \int_0^s ds' \ldots + e^{-\nu_1 t}\int_{T_{\epsilon}}^t ds \ldots  \int_0^{T_{\epsilon}} ds' \ldots + e^{-\nu_1 t}\int_{T_{\epsilon}}^t ds \ldots  \int_{T_{\epsilon}}^s ds' \ldots .\end{align*}
We now compute and integral term separately using the bounds $g(t)\leq \|g\|_{\infty}:=\max \limits_{t \geq 0}g(t)$
on the interval $[0,T_{\epsilon}]$, and $g(t)<\epsilon$ on $[T_{\epsilon},\infty)$, i.e.
\begin{align*} \T_{11}(t) &\leq e^{-\nu_1 t} \frac{\|g\|_{\infty}}{\nu_2}\int_0^{T_{\epsilon}} ds \, e^{(\nu_1-\nu_2)s} (e^{\nu_2 s}-1) \leq
e^{-\nu_1 t} \frac{\|g\|_{\infty}}{\nu_2}\left( \frac{1}{\nu_1}(e^{\nu_1 T_{\epsilon}}-1)-\frac{1}{\nu_1-\nu_2}\left(e^{(\nu_1-\nu_2) T_{\epsilon}}-1\right)\right) ,\\
 \T_{12}(t) &\leq e^{-\nu_1 t} \frac{\|g\|_{\infty}}{\nu_2}\int_{T_{\epsilon}}^t ds \, e^{(\nu_1-\nu_2)s} (e^{\nu_2 T_{\epsilon}}-1) \leq
 e^{-\nu_1 t} \frac{\|g\|_{\infty}}{\nu_2(\nu_1-\nu_2)} \left(e^{(\nu_1-\nu_2) T_{\epsilon}}-1\right)\left(e^{\nu_2 T_{\epsilon}}-1\right) ,\\
 \T_{13}(t) & \leq e^{-\nu_1 t} \frac{\epsilon}{\nu_2}\int_{T_{\epsilon}}^t ds \, e^{(\nu_1-\nu_2)s} (e^{\nu_2 s}-e^{\nu_2 T_{\epsilon}}) \\ & \leq e^{-\nu_1 t} \frac{\epsilon}{\nu_2}\left( \frac{1}{\nu_1}\left(e^{\nu_1 t}-e^{\nu_1 T_{\epsilon}}\right) -\frac{1}{\nu_1-\nu_2}\left(e^{(\nu_1-\nu_2) t} -e^{(\nu_1-\nu_2) T_{\epsilon}}\right)\right) \\ &\leq \frac{\epsilon}{\nu_2}\left( \frac{1}{\nu_1}\left( 1-e^{-\nu_1(t-T_{\epsilon})}\right)-\frac{1}{\nu_1-\nu_2}
 \left( e^{-\nu_2 t} -e^{-\nu_1(t-T_{\epsilon})}e^{-\nu_2 T_{\epsilon}} \right)\right).\end{align*}
 It becomes clear from the $\T_{11}(t), \T_{12}(t), \T_{13}(t)$ estimates that we can choose
 $T_{\epsilon}'=T_{\epsilon}'(\epsilon, T_{\epsilon},\|g\|_\infty)>0$ s.t $\T_{11}(t),\T_{12}(t)<\epsilon$ and also
 $\T_{13}(t)<\frac{2}{\nu_1 \nu_2}\epsilon$ for $t>T_{\epsilon}'$, i.e.
 $\T_{1}<\left( 2+\frac{2}{\nu_1 \nu_2}\right) \epsilon$ for $t>T_{\epsilon}'$. The proof for the term $\T_2(t)$
 is similar and therefore omitted.
\end{proof}

It is useful to derive uniform (in time) bounds based on the above
estimates, which also involve the integrability of $g(t)$ in time, i.e.
a uniform bound with respect to the integral $\int_0^{\infty} g(t)\, dt <\infty$.
We show the uniform bounds in the following corollary.

\begin{corollary} \label{Corollary:Gron1} Let $y=y(t)$ be a nonnegative $C^2$-function satisfying the second order
differential inequality \eqref{Sec-l}, and $g(t)$ is an integrable function. Then we have the
following uniform bounds for $y(t)$:
\\(i) $b^2-4ac > 0 $ :
\begin{align*} y(t)\leq y_0 +\frac{a}{\sqrt{b^2-4ac}}(|y_1| +\nu_1 y_0) +\frac{1}{2 \sqrt{ac}}\int_0^{\infty}ds \, g(s) \end{align*}
\\(ii) $b^2-4ac \leq 0 $ :

if $y_1 \geq 0$ :
\begin{align*} y(t)\leq e^{-\frac{y_0}{\frac{b}{2a} y_0 +y_1}} \left(1+\frac{2a}{b} \right) y_0 +\frac{1}{b}\int_0^{\infty}ds \, g(s) ,\end{align*}

if $y_1<0$ :
\begin{align*} y(t)\leq y_0 +\frac{1}{b}\int_0^{\infty}ds \, g(s) .\end{align*}
\end{corollary}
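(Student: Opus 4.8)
The plan is to read the uniform estimates directly off the pointwise bounds for $y(t)$ established in Lemma \ref{Lemma:Gron1}, by maximizing each of their ingredients over $t\in[0,\infty)$. In both cases the right-hand side of \eqref{Pos-Dis}, resp.\ \eqref{Neg-Dis}, splits into a \emph{homogeneous part}, built from $y_0,y_1$ and decaying exponentials, plus an \emph{inhomogeneous part}, which is an iterated integral of the forcing $g$ against a nonnegative kernel. I would bound the two parts separately and add up the resulting constants, using throughout that $y_0\ge 0$, $g\ge 0$, and $\int_0^\infty g(s)\,ds<\infty$.

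For the homogeneous part when $b^2-4ac>0$, the two terms are $e^{-\nu_1 t}y_0$ and $\dfrac{a\,(e^{-\nu_2 t}-e^{-\nu_1 t})}{\sqrt{b^2-4ac}}\,(y_1+\nu_1 y_0)$; since $\nu_1>\nu_2>0$ one has $0\le e^{-\nu_1 t}\le 1$ and $0\le e^{-\nu_2 t}-e^{-\nu_1 t}\le 1$ for all $t\ge 0$, so bounding these prefactors by $1$ and using $|y_1+\nu_1 y_0|\le|y_1|+\nu_1 y_0$ gives the first two summands of the claimed bound. For the homogeneous part when $b^2-4ac\le 0$, namely $\phi(t):=e^{-\frac{b}{2a}t}\bigl(y_0+(\tfrac{b}{2a}y_0+y_1)t\bigr)$, I would use $1+\tfrac{b}{2a}t\le e^{\frac{b}{2a}t}$: if $y_1<0$ this immediately gives $\phi(t)=e^{-\frac{b}{2a}t}\bigl(y_0(1+\tfrac{b}{2a}t)+y_1 t\bigr)\le y_0$; if $y_1\ge 0$, then $\phi$ attains its maximum at the unique interior critical point $t_\ast$ with $\tfrac{b}{2a}t_\ast=\dfrac{y_1}{\frac{b}{2a}y_0+y_1}$, and evaluating $\phi(t_\ast)$ produces the constant in the statement.

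For the inhomogeneous part I would apply Tonelli's theorem to interchange the order of integration, which collapses the double integral into a single convolution $\int_0^t K(t-s')\,g(s')\,ds'$ with an explicit kernel $K\ge 0$, after which $\int_0^t K(t-s')g(s')\,ds'\le\bigl(\sup_{\tau\ge 0}K(\tau)\bigr)\int_0^\infty g$. When $b^2-4ac>0$, carrying out the inner integration gives $K(\tau)=\dfrac{e^{-\nu_2\tau}-e^{-\nu_1\tau}}{a(\nu_1-\nu_2)}=\dfrac{e^{-\nu_2\tau}-e^{-\nu_1\tau}}{\sqrt{b^2-4ac}}$, and since $\nu_1\nu_2=c/a$ the bound reduces to the elementary inequality $e^{-\nu_2\tau}-e^{-\nu_1\tau}\le\dfrac{\nu_1-\nu_2}{2\sqrt{\nu_1\nu_2}}$ for $\tau\ge 0$, giving $\sup_\tau K(\tau)\le\tfrac1{2\sqrt{ac}}$, which is precisely the last summand. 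When $b^2-4ac\le 0$ the same interchange gives $K(\tau)=\tfrac{\tau}{a}e^{-\frac{b}{2a}\tau}$, whose maximum equals $\tfrac{2}{be}<\tfrac1b$, again producing the claimed forcing term.

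The hard part will be the uniform control of the kernel in the case $b^2-4ac>0$, that is, proving $e^{-\nu_2\tau}-e^{-\nu_1\tau}\le\tfrac{\nu_1-\nu_2}{2\sqrt{\nu_1\nu_2}}$. I would write $e^{-\nu_2\tau}-e^{-\nu_1\tau}=2e^{-p\tau}\sinh(q\tau)$ with $p=\tfrac{\nu_1+\nu_2}{2}$ and $q=\tfrac{\nu_1-\nu_2}{2}$, locate the maximizing $\tau$ from $\tanh(q\tau)=q/p$, and reduce the claim to the one-variable inequality $\tfrac{p}{2q}\ln\tfrac{p+q}{p-q}\ge 1$, which follows from $\ln\tfrac{1+r}{1-r}\ge 2r$ with $r=q/p\in(0,1)$. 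The only other delicate point is evaluating $\phi$ at its interior critical point in the case $b^2-4ac\le 0$, $y_1\ge 0$; the remaining steps — justifying the use of Tonelli (legitimate since $g$ is continuous and nonnegative) and reassembling the three constants — are routine.
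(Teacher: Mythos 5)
Your overall strategy coincides with the paper's: split the pointwise bounds \eqref{Pos-Dis} and \eqref{Neg-Dis} of Lemma \ref{Lemma:Gron1} into a homogeneous part and a forcing part and bound each uniformly in $t$. For the forcing part you use Tonelli plus a sup bound on the collapsed kernel, whereas the paper uses H\"older followed by Young's convolution inequality; both routes are valid and give the constants $\tfrac{1}{2\sqrt{ac}}$ and $\tfrac1b$ (yours in fact yields the slightly sharper $\tfrac{1}{e\sqrt{ac}}$ and $\tfrac{2}{be}$), and your elementary inequality $e^{-\nu_2\tau}-e^{-\nu_1\tau}\le\tfrac{\nu_1-\nu_2}{2\sqrt{\nu_1\nu_2}}$ is correct. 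Your treatment of case (i) and of case (ii) with $y_1<0$ is also fine.

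The gap is in case (ii) with $y_1\ge 0$. Writing $\lambda=\tfrac{b}{2a}$, the critical point of $\phi(t)=e^{-\lambda t}\bigl(y_0+(\lambda y_0+y_1)t\bigr)$ is indeed at $\lambda t_*=\tfrac{y_1}{\lambda y_0+y_1}$, but evaluating there gives
\begin{equation*}
\phi(t_*)=e^{-\frac{y_1}{\lambda y_0+y_1}}\,\frac{\lambda y_0+y_1}{\lambda}
=e^{-\frac{y_1}{\frac{b}{2a}y_0+y_1}}\Bigl(y_0+\frac{2a}{b}\,y_1\Bigr),
\end{equation*}
which is \emph{not} the constant $e^{-\frac{y_0}{\frac{b}{2a}y_0+y_1}}\bigl(1+\frac{2a}{b}\bigr)y_0$ appearing in the statement; the two differ by an interchange of $y_0$ and $y_1$ in two places and neither dominates the other in general. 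The discrepancy matters: for $y_0=0$, $y_1>0$, $g\equiv 0$ and $b^2=4ac$, the function $y(t)=y_1te^{-\frac{b}{2a}t}$ is an admissible nonnegative solution whose maximum is $\tfrac{2ay_1}{be}>0$, while the stated bound evaluates to $0$. So the constant as printed cannot be obtained from $\phi(t_*)$ (it is presumably a typo in the corollary), and your assertion that the evaluation ``produces the constant in the statement'' is false as written. The paper's own proof hides this point by dismissing the non-integral terms as ``an elementary exercise in calculus''; you should either replace the stated constant by $\phi(t_*)$ as computed above or flag explicitly that the printed one is not derivable.
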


\begin{proof}

It is an elementary exercise in calculus to give a uniform bound for the non integral terms that appear in \eqref{Pos-Dis} and \eqref{Neg-Dis} in Lemma 1.
The most important terms to bound are the integral terms \begin{equation*}\int_0^t ds \, e^{-\nu_1 (t-s)} \int_0^s ds' e^{-\nu_2 (s-s')}g(s') \quad \text{and} \quad \int_0^t ds \int_0^s ds' e^{-\frac{b}{2a}(t-s')} g(s') .\end{equation*} We have the following H\"older inequality for the first integral
\begin{align*}  \int_0^t ds \, e^{-\nu_1 (t-s)} \int_0^s ds' e^{-\nu_2 (s-s')}g(s') \leq \left( \int_0^t ds \, e^{-2 \nu_1 (t-s)} \right)^{1/2}
\left( \int_0^t ds \, \Big| \int_0^s ds' \, e^{-\nu_2(s-s')} g(s')\Big| ^2 \right)^{1/2} .\end{align*}
The first term computed by direct calculation is $\sqrt{\frac{1}{2 \nu_1}(1-e^{-2\nu_1 t})}\leq \sqrt{\frac{1}{2 \nu_1}}$. The second term is bounded
by the integral
$\left( \int_0^{\infty} ds \, \Big| \int_0^s ds' \, e^{-\nu_2(s-s')} g(s') \Big|^2 \right)^{1/2}$ which is non other than the $L^2$ norm of the convolution
of the function $f(t)=e^{-\nu_2 t}$ with $g(t)$, i.e. $\|f \ast g\|_{2}$. Now according to Young's inequality for convolutions
we have $\|f \ast g\|_r \leq \|f \|_p \| g\|_q $ for $\frac{1}{p}+\frac{1}{q}=1+\frac{1}{r}$ and $1 \leq p, q, r \leq \infty$, and if we take $p=2$, $q=1$, and $r=2$
then we have that the last term is bounded by $\frac{1}{\sqrt{2 \nu_2}} \|g\|_1$ so we  have
\begin{align*} \int_0^t ds \, e^{-\nu_1 (t-s)} \int_0^s ds' e^{-\nu_2 (s-s')}g(s')
\leq \frac{1}{2\sqrt{\nu_1 \nu_2}}\|g \|_1 =\frac{1}{2}\sqrt{\frac{a}{c}}\|g \|_1.\end{align*}
In similar fashion we can show that
\begin{align*} \int_0^t ds \int_0^s ds' e^{-\frac{b}{2a}(t-s')} g(s') \leq \frac{a}{b} \|g\|_1 .\end{align*}
\end{proof}

We now proceed to prove a different Lemma for a Gronwall inequality which is
more tailored to the structure of \eqref{BasIneq2}, in the way that Lemma 1 was more specific to the structure of \eqref{BasIneq1}.
This is a second order differential inequality that includes a convolution term and an exponential
decay term in the RHS.
\begin{equation} \label{Sec-nl} \ddot{y}+a\dot{y}+b y \leq c \int_0^t e^{-\nu (t-s)}y(s)\, ds
+de^{-\nu t}, \qquad t\geq 0, \quad a,b,c,d \geq 0 .\end{equation}

\begin{lemma} \label{Lemma:Gron2} Let $y=y(t)$ be a nonnegative $C^2$-function
satisfying the differential inequality \eqref{Sec-nl},
with initial conditions $y(0)=y_0$ and $\dot{y}(0)=y_1$. Assume also that we have the set
\begin{equation} \label{D} \D:=\left\{\mu >0 : \mu^3-(a+\nu)\mu^2 +(b+a \nu)\mu \leq d_* ,
\quad \mu <\min \{ a, \nu \} \right\}\end{equation}
where $d_*=b \nu -c$.
If $d_*>0$ then the set $\D $ is nonempty, and if we define $\mu_*:=\sup \D$ we have the following estimates
\begin{equation*} y(t) \leq e^{-(a-\mu_*) t}y_0 +\frac{1}{a-2\mu_*}
\left(e^{-\mu_* t}-e^{-(a-\mu_*) t}\right)\left(y_1+(a-\mu_*) y_0 +\frac{d}{\nu -\mu_*}\right),
\quad \text{if} \quad \mu_* < \frac{a}{2}\end{equation*}
and
\begin{equation*} y(t)\leq e^{-\frac{a}{2}t}y_0 +\left(y_1 +\frac{a}{2}y_0 +
\frac{d}{\nu-\frac{a}{2}} \right)t e^{-\frac{a}{2}t} , \quad \text{if}\quad \mu_* \geq \frac{a}{2} .\end{equation*}
\end{lemma}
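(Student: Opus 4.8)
The plan is to follow the strategy of Lemma \ref{Lemma:Gron1}(i), namely to produce a first–order ``supersolution'' ODE by choosing a good Lyapunov functional, but now the functional must also absorb the convolution term $c\int_0^t e^{-\nu(t-s)}y(s)\,ds$ on the right of \eqref{Sec-nl}. The natural candidate is
\begin{equation*} L(t):=\dot{y}(t)+(a-\mu)y(t)-\frac{c}{\nu-\mu}\int_0^t e^{-\nu(t-s)}y(s)\,ds-\frac{d}{\nu-\mu}e^{-\nu t}, \end{equation*}
for a parameter $\mu>0$ still to be fixed with $\mu<\min\{a,\nu\}$ (so that $\nu-\mu>0$ and the decomposition makes sense). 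First I would differentiate $L$, use \eqref{Sec-nl} to bound $\ddot y$, and note that the derivative of the convolution term is $-\nu\int_0^t e^{-\nu(t-s)}y\,ds+cy$ up to the factor $c/(\nu-\mu)$, while the derivative of the last term is $-\nu\cdot\frac{d}{\nu-\mu}e^{-\nu t}$; after collecting terms the claim should reduce to the differential inequality $\dot L(t)\le -\mu L(t)$, provided $\mu$ is chosen so that the coefficient of $y(t)$ that is left over is $\le 0$. Writing out that leftover coefficient gives precisely a cubic condition: one needs $\mu^3-(a+\nu)\mu^2+(b+a\nu)\mu-(b\nu-c)\le 0$, i.e. $\mu\in\D$ with $d_*=b\nu-c$ as in \eqref{D}. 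This is the heart of the argument and the step I expect to be the main obstacle — keeping track of all the cross terms when differentiating the convolution piece, and verifying that the residual really is the stated cubic (so that the ``admissible exponents'' are exactly the set $\D$).

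Once $\dot L\le-\mu L$ is established, Gronwall gives $L(t)\le e^{-\mu t}L(0)=e^{-\mu t}\bigl(y_1+(a-\mu)y_0\bigr)$ (the integral and exponential terms in $L$ vanish at $t=0$). Plugging this back into the definition of $L$ yields
\begin{equation*} \dot y(t)+(a-\mu)y(t)\le \frac{c}{\nu-\mu}\int_0^t e^{-\nu(t-s)}y(s)\,ds+\frac{d}{\nu-\mu}e^{-\nu t}+e^{-\mu t}\bigl(y_1+(a-\mu)y_0\bigr), \end{equation*}
which is now a \emph{first}-order inequality for $y$. To finish I would need to bound the surviving convolution term on the right; the clean way is to observe that the whole point of choosing $\mu\in\D$ (and hence, in the argument, of comparing $y$ against the solution of the associated linear ODE) is that $y(t)$ is itself controlled by the ansatz we are proving, so one closes the loop by a bootstrap/continuity argument — but more directly, since $\mu<\nu$, one can simply feed the a priori bound $y(s)\le$ (the RHS candidate) forward, or integrate the first-order inequality with integrating factor $e^{(a-\mu)t}$ and use $\mu<\nu$ to estimate $\int_0^t e^{-(a-\mu)(t-\sigma)}e^{-\nu\sigma}\,d\sigma$ and the iterated convolution. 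The outcome, after choosing $\mu=\mu_*:=\sup\D$ to optimize the decay exponent, is exactly the two displayed estimates, with the dichotomy $\mu_*<a/2$ versus $\mu_*\ge a/2$ coming from whether $a-\mu_*$ and $\mu_*$ are distinct (giving the $\frac{1}{a-2\mu_*}(e^{-\mu_* t}-e^{-(a-\mu_*)t})$ factor) or coincide (the resonant case, giving the $te^{-\frac{a}{2}t}$ factor).

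Two small technical points I would be careful about. First, nonnegativity of $y$ is used — as in Lemma \ref{Lemma:Gron1}(ii) — to drop an unwanted term when bounding $\ddot y$ or when passing to the integrating-factor form; I should state explicitly where. Second, nonemptiness of $\D$ when $d_*>0$: the cubic $P(\mu)=\mu^3-(a+\nu)\mu^2+(b+a\nu)\mu$ satisfies $P(0)=0$ and $P'(0)=b+a\nu>0$, so $P(\mu)<d_*$ for all sufficiently small $\mu>0$, and such $\mu$ also satisfy $\mu<\min\{a,\nu\}$; hence $\D\ne\emptyset$ and $\mu_*$ is well defined (one should also check $\mu_*$ actually lies in $\D$, using that $\D$ is defined by a non-strict inequality and $P$ is continuous, so $\D$ is closed in $(0,\min\{a,\nu\})$ — or just note the estimates only require $\mu_*=\sup\D$). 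I would relegate the routine calculus — the explicit integration of the first-order inequality and the elementary bounds on the non-integral terms — to at most a line or two, since the only genuinely new content is the Lyapunov functional and the cubic it produces.
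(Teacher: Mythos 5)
Your overall strategy is the paper's: a Lyapunov functional containing the convolution, a reduction to the cubic $\mu^3-(a+\nu)\mu^2+(b+a\nu)\mu\le b\nu-c$ defining $\D$, and an optimization over $\mu$. But the signs in your functional are wrong, and this is not cosmetic — it breaks the proof at two places. The paper takes
\begin{equation*}
L(t)=\dot y + d_1 y + d_2\int_0^t e^{-\nu(t-s)}y(s)\,ds + d_3 e^{-\nu t},\qquad d_1=a-\mu,\quad d_2\ge \tfrac{c}{\nu-\mu},\quad d_3=\tfrac{d}{\nu-\mu},
\end{equation*}
with \emph{positive} coefficients on the last two terms. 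Writing $I(t)$ for the convolution, $\dot I=y-\nu I$, so the contribution of $+d_2 I$ to $\dot L+\mu L$ is $d_2(\mu-\nu)I\le -cI$, which is exactly what absorbs the $+cI$ coming from \eqref{Sec-nl}; similarly $d_3(\mu-\nu)e^{-\nu t}\le -de^{-\nu t}$ absorbs the forcing. With your choice $-\frac{c}{\nu-\mu}I$ the same computation gives a contribution of $+cI$, so $\dot L+\mu L\le 2cI+2de^{-\nu t}+(\cdots)y$, and $\dot L\le-\mu L$ simply fails: the convolution forcing doubles instead of cancelling.

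The sign also matters in the second half. Because $y\ge 0$ and $d_2,d_3\ge 0$, the paper can drop the two nonnegative terms from $L$ and get the clean first-order inequality $\dot y+d_1 y\le L(t)\le e^{-\mu t}L(0)$ with \emph{no} surviving convolution term; integrating this is what produces the two displayed estimates (resonant and non-resonant cases). In your version the convolution reappears on the right-hand side, and the ``bootstrap/continuity argument'' you invoke to close the loop is not supplied — as written it is circular. Finally, note that $L(0)$ is \emph{not} $y_1+(a-\mu)y_0$: the exponential term does not vanish at $t=0$, and in the correct version $L(0)=y_1+(a-\mu)y_0+\frac{d}{\nu-\mu}$, which is precisely where the $\frac{d}{\nu-\mu_*}$ in the stated bounds comes from; your computation loses it. Your verification of the cubic and of the nonemptiness of $\D$ when $d_*>0$ is fine, and flipping the signs of the convolution and exponential terms in $L$ repairs the whole argument.
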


\begin{proof}
The approach that we take is to consider the functional
\begin{equation*} L(t):=\dot{y}(t)+d_1 y(t) +d_2 \int_0^t
e^{-\nu (t-s)}y(s)\, ds +d_3 e^{-\nu t},\end{equation*} and try
to find values for the coefficients $d_1, d_2, d_3$ and some $\mu >0$ so that $L(t)$ decays exponentially
with rate $\mu>0$ i.e. $\dot{L}(t)\leq -\mu L(t)$.  If we can find such
$\mu>0$, then (keeping in mind that $y(t)\geq 0$) we have
\begin{align*} \dot{y}(t)+d_1 y(t) \leq e^{-\mu t}L(0)=e^{-\mu t}(y_1 +d_1 y_0 +d_3).\end{align*}
Solving this first order inequality by using integrating factor, we get that
\begin{equation*} y(t)\leq e^{-d_1 t}y_0 +\frac{1}{d_1-\mu}
\left(e^{-\mu t}-e^{-d_1 t}\right)(y_1+d_1 y_0 +d_3)
\quad \text{for} \quad d_1 \neq \mu\end{equation*}
and that \begin{equation*} y(t)\leq e^{-d_1 t}y_0+(y_1+d_1 y_0 +d_3)t e^{-d_1 t}  \quad
\text{for} \quad d_1=\mu. \end{equation*}
Of course, if there are many different choices for $d_1, d_2, d_3, \mu >0$ the ideal scenario is
to pick those that maximize the decrease rates for both $d_1$ and $\mu$.
In order to compute $\dot{L}(t)$ we make use the fact that \begin{equation*}\frac{d}{dt}\int_0^te^{-\nu (t-s)}y(s)\, ds
=y(t)-\nu \int_0^te^{-\nu (t-s)}y(s)\, ds . \end{equation*} Let us now
compute $\dot{L}(t)+\mu L(t)$ and set it $\leq 0$ to derive the system
of all conditions that the parameters need to satisfy so that we have dissipation. Indeed, we have
\begin{align*} \dot{L}(t)+ \mu L(t)=\ddot{y}+(d_1 +\mu)\dot{y}+(d_2+d_1 \mu)y
+d_2(\mu-\nu)\int_0^te^{-\nu(t-s)}y(s)\, ds +d_3(\mu-\nu)e^{-\nu t} \leq 0 .\end{align*}
We use inequality \eqref{Sec-nl} to identify the set of conditions
for which $\dot{L}(t)+\mu L(t)\leq 0$. This yields the
following system of equalities \& inequalities that need to be satisfied all at the same time, i.e.
\begin{subequations} \label{Syst}  \begin{align}  \label{Syst1} d_1+\mu &=a
\\  \label{Syst2} d_2 + d_1 \mu  & \leq  b
\\  \label{Syst3} d_2(\mu-\nu) & \leq -c
\\  \label{Syst4} d_3(\mu-\nu)  & \leq -d
\end{align}
\end{subequations}
We now have to check the validity of conditions in system
\eqref{Syst} and then pick the parameters so we get the
maximum decay rates. We first choose $\mu$ such that
\begin{equation} \label{Cond1} 0<\mu <\min \{ a, \nu \}.\end{equation}
We see that \eqref{Syst4} is a condition independent of \eqref{Syst1}-\eqref{Syst3},
since the $d_3$ term is not involved in any of \eqref{Syst1}-\eqref{Syst3}, i.e.
we can make the following choice for $d_3$ after we find admissible $\mu$ that solve the system
\begin{equation*}  d_3 = \frac{d}{\nu -\mu}. \end{equation*}
We can now solve \eqref{Syst1} in $d_1$ ($d_1=a -\mu$) and substitute into \eqref{Syst2} to get
\begin{equation} \label{cond-d2} d_2 \leq b -d_1 \mu = \mu^2 -a\mu +b .\end{equation}
What is left to do, is to find values of $\mu>0$ s.t. \eqref{cond-d2} and \eqref{Syst3} are simultaneously satisfied.
This is true, if we can find $\mu$ that satisfies the inequality
\begin{equation} \label{Cond2} \mu^2 -a\mu +b \geq \frac{c}{\nu -\mu} ,\end{equation}
then by \eqref{Syst3} we can choose any $d_2$ such that
\begin{equation*} \mu^2 -a\mu +b  \geq d_2 \geq \frac{c}{\nu -\mu} .\end{equation*}
The two conditions \eqref{Cond1} and  \eqref{Cond2} are sufficient conditions for the system \eqref{Syst1}-\eqref{Syst4} to have admissible solutions. Practically, \eqref{Cond2} leads to the cubic
inequality contained in the definition of set $\D$. Notice that when $b \nu >c$, then some $\mu>0$ that solves both \eqref{Cond1} and  \eqref{Cond2} exists.

The last step, after we make sure that there is some $\mu>0$ so that \eqref{Cond1} and \eqref{Cond2} are satisfied, is to pick the one $\mu$ that maximizes the decay rate.
For that, we define the set $\D$ like we did in \eqref{D} and the supremum $\mu_*$ of that set. If $\mu_*\geq \frac{a}{2}$ then the optimal decay rate is $\lambda=\frac{a}{2}$,
otherwise, if $\mu_*<\frac{a}{2}$ then the optimal rate is $\lambda=\mu_*$.
\end{proof}

The following uniform estimate can be shown with the help of Lemma \ref{Lemma:Gron2}

\begin{corollary} \label{Corollary:Gron2} Let $y=y(t)$ be a nonnegative $C^2$-function
satisfying the differential inequality \eqref{Sec-nl},
with initial conditions $y(0)=y_0$ and $\dot{y}(0)=y_1$ and that condition
$b \nu >c$ is satisfied. Then we have the following bound for $y(t)$
\begin{equation*} y(t)\leq e^{-a t}y_0 +\frac{1}{a}(1-e^{-a t})
\left( y_1 +a y_0 +\frac{d}{\nu}\right), \qquad t \geq 0.\end{equation*} In particular, we have the uniform bound \begin{equation*} y(t)< y_0 +\frac{|y_1|}{a}+\frac{d}{a \nu}, \qquad t \geq 0.\end{equation*}
\end{corollary}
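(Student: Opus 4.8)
The plan is to reuse the Lyapunov functional from the proof of Lemma \ref{Lemma:Gron2}, but now in the degenerate case $\mu=0$: here we only want a uniform-in-time estimate, not an explicit exponential rate, so there is no need to optimize the decay exponent and we may simply ask that the functional be non-increasing. Concretely, I would consider
\begin{equation*} L(t):=\dot{y}(t)+d_1 y(t)+d_2\int_0^t e^{-\nu(t-s)}y(s)\,ds +d_3 e^{-\nu t}, \end{equation*}
and compute $\dot L(t)$ exactly as in Lemma \ref{Lemma:Gron2} (using $\frac{d}{dt}\int_0^t e^{-\nu(t-s)}y(s)\,ds=y(t)-\nu\int_0^t e^{-\nu(t-s)}y(s)\,ds$ together with \eqref{Sec-nl}). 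Imposing $\dot L(t)\le 0$ then yields the system $d_1=a$, $d_2\le b$, $d_2(0-\nu)\le -c$, $d_3(0-\nu)\le -d$, i.e. $d_1=a$, $\frac{c}{\nu}\le d_2\le b$, and $d_3\ge \frac{d}{\nu}$. The interval $[\frac{c}{\nu},b]$ is nonempty precisely because of the standing hypothesis $b\nu>c$; that is the only place this assumption enters, and it is essentially the only (mild) point requiring attention in the whole argument. Choosing $d_1=a$, $d_2=\frac{c}{\nu}$, $d_3=\frac{d}{\nu}$, we get $\dot L(t)\le 0$, hence $L(t)\le L(0)=y_1+a y_0+\frac{d}{\nu}$ for all $t\ge 0$.

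Next, since $y$ is nonnegative, both $d_2\int_0^t e^{-\nu(t-s)}y(s)\,ds$ and $d_3 e^{-\nu t}$ are nonnegative, so the bound on $L(t)$ collapses to the first-order differential inequality
\begin{equation*} \dot y(t)+a\,y(t)\le y_1+a y_0+\frac{d}{\nu},\qquad t\ge 0. \end{equation*}
Multiplying by the integrating factor $e^{at}$, so that $\frac{d}{dt}\big(e^{at}y(t)\big)\le e^{at}\big(y_1+a y_0+\frac{d}{\nu}\big)$, and integrating over $[0,t]$ gives
\begin{equation*} y(t)\le e^{-at}y_0+\frac{1}{a}\big(1-e^{-at}\big)\Big(y_1+a y_0+\frac{d}{\nu}\Big), \end{equation*}
which is the first assertion of the corollary.

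Finally, for the uniform bound I would expand the right-hand side: writing $\frac{1}{a}(1-e^{-at})(y_1+a y_0+\frac{d}{\nu})=(1-e^{-at})y_0+\frac{1-e^{-at}}{a}y_1+\frac{1-e^{-at}}{a\nu}d$ and recombining $e^{-at}y_0+(1-e^{-at})y_0=y_0$, one gets
\begin{equation*} y(t)\le y_0+\frac{1-e^{-at}}{a}\,y_1+\frac{1-e^{-at}}{a\nu}\,d. \end{equation*}
Since $0\le 1-e^{-at}<1$ and $d\ge 0$, the last term is at most $\frac{d}{a\nu}$, while $\frac{1-e^{-at}}{a}y_1\le\frac{|y_1|}{a}$ regardless of the sign of $y_1$; this yields $y(t)<y_0+\frac{|y_1|}{a}+\frac{d}{a\nu}$, as claimed. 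The argument is routine once one sees that the right move is to take $\mu=0$ in the Lyapunov functional of Lemma \ref{Lemma:Gron2} (morally the limit $\mu\to 0^+$, trading the exponential rate for a clean uniform estimate); no new ideas beyond Lemma \ref{Lemma:Gron2} are required, so I do not anticipate any real obstacle.
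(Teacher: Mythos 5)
Your proof is correct and is essentially the paper's own argument: the paper likewise takes the Lyapunov functional of Lemma \ref{Lemma:Gron2} with $\mu=0$ and coefficients $d_1=a$, $d_2=c/\nu$, $d_3=d/\nu$, obtains $\dot L\le 0$ from $b\nu>c$, and then drops the nonnegative terms to reduce to the first-order inequality $\dot y+ay\le L(0)$. Your write-up just fills in the integration and the final elementary estimate, which the paper leaves implicit.
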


\begin{proof} The proof employs the estimates for $L(t)$ that we proved in Lemma \ref{Lemma:Gron2}, only now we restrict to the case
of a functional $L(t)$ with $\mu=0$ (no decay rate), so in effect we use the functional \begin{equation*} L(t)=\dot{y}(t)+a y(t)+\frac{c}{\nu}
\int_0^te^{-\nu(t-s)}y(s)\, ds +\frac{d}{\nu}e^{-\nu t} .\end{equation*}
\end{proof}

\section{Proof of Main Theorems} \label{Sec:Proofs}

We are now in position to prove the main theorems of this paper.

\begin{proof}[\textbf{Proof of Theorem \ref{Theorem1}}] The proof essentially uses Proposition \ref{Prop:Vel-diam}, the Gronwall lemmas proved in Section \ref{Sec:Gron}, and a continuity argument.

\textbf{Step A}: Let $T>0$ be an arbitrary number so that $\A(v(t))>\delta_0$ for all $t \in [0,T)$ (we know that such a $T>0$ exists by continuity of $\A(v)$). Then, we have that \eqref{BasIneq1} holds
on that interval $[0,T)$, and subsequently we may use Lemma \ref{Lemma:Gron1} with
\begin{equation*} a=1, \qquad b=\frac{\gamma}{\chi}, \qquad c=\frac{2k}{\chi}\psi_m \delta_0 , \qquad g(t)=\frac{4}{\chi^2}\left(D(s)^2 +D(v)^2 \max \limits_{1 \leq i \leq N}|s_i|^2 \right).\end{equation*}
In order to control $g(t)$ in terms of $\S(t)$ we have
\begin{equation*} D(s)^2=\max \limits_{i,j}|s_i -s_j|^2 \leq (|s_l|+|s_m|)^2 \leq 2(|s_l|^2+|s_m|^2)\leq 2 N \S(t) ,\end{equation*}
where $l,m$ are the indices that realize the $\max \limits_{i,j}|s_i -s_j|^2$, and also
\begin{equation*} \max \limits_{i} |s_i|^2 \leq N \S(t), \quad \text{which both lead to} \quad g(t)\leq \frac{16 N}{\chi^2}\S(t) .\end{equation*}
Now if we integrate $g(t)$ and use Proposition \ref{Prop:Ha}, we have that
\begin{equation*}  \int_0^\infty g(t) \, dt \leq \frac{16 N}{\chi^2} \int_0^\infty \S(t) \, dt\leq \frac{8 N k}{\chi \gamma}\left( \frac{\chi}{2}\E(0)+\frac{1}{k}\S(0)\right) . \end{equation*}
We have that for all $t \in [0,T)$ the estimates \eqref{Pos-Dis}--\eqref{Neg-Dis} in Lemma \ref{Lemma:Gron1} hold.

\textbf{Step B}: We now define the set
\begin{equation*} \S:=\{ T >0 : \A(v(t))>\delta_0, \quad \forall t \in [0,T) \}, \qquad T_*:=\sup \S .\end{equation*}
We have shown in Step A due to the continuity property that the set $\S$ is non-empty. We claim that $T_*=\infty$ so that the estimates above hold true for all $t \geq 0$. Indeed, assume that $T_*<\infty$, then we
have that $\lim \limits_{t \to T_*^-}\A(v(t))=\delta_0$ or equivalently $\lim \limits_{t \to T_*^-}D(v(t))^2=2-2\delta_0$. Now from
Corollary \ref{Corollary:Gron1} we have that
\begin{equation*} D(v(t))^2 \leq 2 \C_0 <2-2\delta_0 , \qquad \forall t \in [0, T_*) .\end{equation*}
Taking the limit $\lim \limits_{t \to T_*^-} D(v(t))^2$ leads to the contradiction $2-2\delta_0<2-2\delta_0$. Therefore $T_*=\infty$, and the estimates from Step A for $D(v)$ hold.
Finally, the vanishing of velocity diameters follows from Lemma \ref{Lemma:Integr-vanish}.
\end{proof}

\begin{proof}[\textbf{Proof of Theorem \ref{Theorem2}}] The proof follows the lines of the proof of Theorem \ref{Theorem1}.

\textbf{Step A}: Let $T>0$ be an arbitrary number so that $\A(v(t))>\delta_0$ for all $t \in [0,T)$ (we know that such a $T>0$ exists by continuity of $\A(v)$). Then, we have that \eqref{BasIneq2} holds
on that interval $[0,T)$, and subsequently we may use Lemma \ref{Lemma:Gron2} with
\begin{equation*} a=\frac{\gamma}{\chi}, \qquad b=\frac{2k}{\chi}\psi_m \delta_0, \qquad c=\frac{12k^2 \psi_M}{\gamma \chi} ,\qquad d=\frac{4}{\chi^2}\left( D(s_0)^2 +2 \max \limits_{1 \leq i \leq N}|s_{i0}|^2\right)\end{equation*}
where the set $\D$ in Lemma \ref{Lemma:Gron2} takes the form of \eqref{D_set}. Since $(H1)$ holds, we have that $\D$ is non-empty and $\mu_*>0$. Then for all $t \in [0,T)$ we have the estimates
\begin{align*} D(v)^2 \leq \left\{ \begin{array}{ll} e^{-d_1 t}D(v_0)^2 +\frac{1}{d_1-\mu_*}
\left(e^{-\mu_* t}-e^{-d_1 t}\right)\left(2 D(v_0)|\dot{D}(v_0)|+
d_1 D(v_0)^2 +\frac{d}{d_1}\right) , \quad \text{for} \quad \mu_* < \frac{\gamma}{2\chi} ,
  \\ \\ e^{-\frac{\gamma}{2\chi}t} D(v_0)^2 +\left(2 D(v_0) |\dot{D}(v_0)| +\frac{\gamma}{2\chi}D(v_0)^2 +\frac{2d \chi}{\gamma} \right)t e^{-\frac{\gamma}{2\chi}t} ,\quad \text{for} \quad  \mu_* \geq \frac{\gamma}{2 \chi} , \end{array} \right. \end{align*} with $d_1=\frac{\gamma}{\chi}-\mu_*$.
\\ \\
\textbf{Step B}: We follow the exact same steps in Step B, as in the proof of Theorem \ref{Theorem1}.
\end{proof}


\quad (Ioannis Markou)
\\
\textsc{Institute of Applied and Computational Mathematics
(IACM-FORTH)}
\\
\textsc{N. Plastira 100, Vassilika Vouton GR - 700 13, Heraklion,
Crete, Greece}

\quad E-mail address: \textbf{ioamarkou@iacm.forth.gr}

\end{document}